\documentclass{amsart}
\usepackage{graphicx}
\usepackage{xcolor}

\usepackage{fullpage}
\usepackage{geometry}
\usepackage{amsmath}
\usepackage{amsthm}
\usepackage{amsfonts}
\usepackage{amssymb}
\usepackage{textcomp}
\usepackage[english]{babel}
\usepackage{amsmath}
\newtheorem{theorem}{Theorem}

\newtheorem{lemma}[theorem]{Lemma}

\newtheorem{proposition}[theorem]{Proposition}

\theoremstyle{definition}
\newtheorem{definition}[theorem]{Definition}

 \newcommand{\zbar}{\overline{z}}

\newcommand{\D}{\mathbb{D}}
\newcommand{\R}{\mathbb{R}}
\newcommand{\N}{\mathbb{N}}

\newcommand{\C}{\mathbb{C}}
\renewcommand{\Re}{\operatorname{Re}}
\renewcommand{\Im}{\operatorname{Im}}

\numberwithin{theorem}{section} \numberwithin{equation}{section}
\usepackage{enumerate}

\begin{document}

\title{An analogue of Green's functions for quasiregular maps}

\author{Mark Broderius}
\address{Department of Mathematics, Northern Illinois University, Dekalb, IL 60115, USA}
\email{mbroderius1@niu.edu}

\author{Alastair N. Fletcher}
\address{Department of Mathematics, Northern Illinois University, Dekalb, IL 60115, USA}
\email{afletcher@niu.edu}

\date{\today}

\maketitle

\begin{abstract}
Green's functions are highly useful in analyzing the dynamical behavior of polynomials in their escaping set. The aim of this paper is to construct an analogue of Green's functions for planar quasiregular mappings of degree two and constant complex dilatation. These Green's functions are dynamically natural, in that they semi-conjugate our quasiregular mappings to the real squaring map. However, they do not share the same regularity properties as Green's functions of polynomials. We use these Green's functions to investigate properties of the boundary of the escaping set and give several examples to illustrate behavior that does not occur for the dynamics of quadratic polynomials.
\end{abstract}

\section{Introduction}
\label{sec:intro}

\subsection{Background}

The iteration theory of quadratic polynomials is a very well understood area of complex dynamics. Since the first fractal images of the Mandelbrot set appeared in the 1980s, and the work of Douady and Hubbard \cite{DH84,DH85} initiated a great deal of interest in this subject, there has been a consistent surge of interest. These fascinating images that arise from simply defined functions yields a surprisingly intricate theory that is still not completely settled. We refer to the texts of Beardon \cite{Bea91}, Carleson and Gamelin \cite{CG93}, and Milnor \cite{Mil06} for introductions to complex dynamics.

The introduction of quasiconformal and quasiregular mappings into the theory of complex dynamics was another major reason for the impetus of the 1980s. Douady and Hubbard's work on polynomial-like mappings \cite{DH85-2} and Sullivan's proof of the No Wandering Domains Theorem for rational maps \cite{Sul85} illustrated the utility of this approach. For more applications of quasiregular mappings in complex dynamics, we refer to the text of Branner and Fagella \cite{BF14}.

More recently, the study of the iteration theory of quasiregular mappings themselves has become an object of interest. This approach naturally extends complex dynamics into higher real dimensions. However, due to tools available only in two dimensions such as the Measurable Riemann Mapping Theorem and the Stoilow Decomposition Theorem, more can be said in the two dimensional case. For an introduction to quasiregular dynamics, we refer to Bergweiler's survey \cite{Ber10} as a starting point.

The subject of our study is the class of quasiregular maps of the form
\begin{equation}
\label{eq:hktc}
H_{K,\theta,c}(z) = \left [ \left ( \frac{ K+1}{2} \right )z + e^{2i\theta} \left ( \frac{K-1}{2} \right ) \zbar \right ]^2 +c ,
\end{equation}
where $K>1$, $\theta \in (-\pi/2,\pi/2]$ and $c\in \C$. The dynamics of these mappings were first studied by the second named author and Goodman in \cite{FG10}, and subsequently by the second named author and Fryer in \cite{FF12,FF16}. We will review the important points from these papers in the preliminary section, but here let us point out that these are the simplest non-injective quasiregular mappings as they have constant complex dilatation
\[ \mu_{H_{K,\theta,c}} \equiv e^{2i\theta} \left ( \frac{K-1}{K+1} \right ) \in \D .\]
It is evident that we can write
\[ H_{K,\theta,c} = P_c \circ h_{K,\theta},\]
where $P_c(z) = z^2+c$ and $h_{K,\theta}$ is the affine map that stretches by a factor $K>1$ in the direction $e^{i\theta}$. This Stoilow decomposition of $H_{K,\theta,c}$ is also a characterization of degree two quasiregular maps in $\C$ with constant complex dilatation: every such map is conformally conjugate to
one of the form \eqref{eq:hktc}. Observe that if $K=1$, then we return to the case of quadratic polynomials. One of the goals of this paper is to illustrate new phenomena that can occur in this setting, when compared to complex dynamics. Figure \ref{fig:5} gives an example where the function has a saddle fixed point, a situation that cannot occur for holomorphic functions.

\begin{figure}[h]
\begin{center}
\includegraphics[width=2.5in]{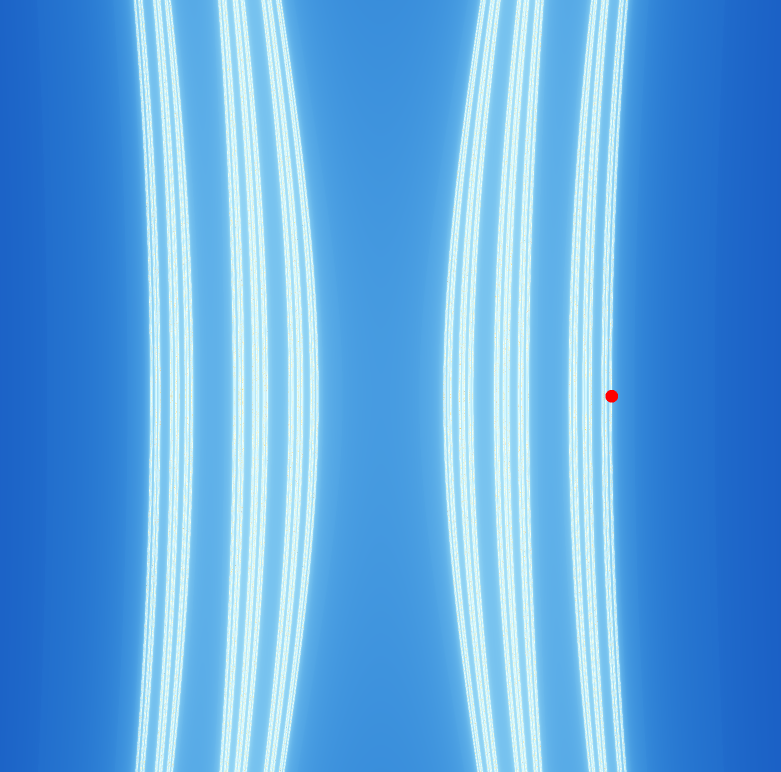}
\end{center}
\caption{The dynamical plane for $H_{5,0,-0.1}$ with the saddle fixed point marked.}
\label{fig:5}
\end{figure}

\subsection{Quadratic polynomials}

We briefly review some of the well-known material from the dynamics of quadratic polynomials to set the stage for what is to come. We recall that every quadratic polynomial is linearly conjugate to one of the form $P_c(z) = z^2+c$.

Given a non-injective polynomial $P$, $\C$ can be decomposed into the escaping set 
\[ I(P) = \{ z\in \C : P^n(z) \to \infty \} \]
and the bounded orbit set
\[ BO(P) = \{ z\in \C : \text{ there exists } M>0 \text{ such that } |P^n(z)| \leq M \text{ for all } n\in \N \}.\]
The bounded orbit set is often called the filled Julia set in the literature, and denoted by $K(P)$, but we will reserve the use of $K$ for the maximal dilatation of quasiregular mappings. 

The chaotic set is called the Julia set, and denoted by $J(P)$, whereas the stable set is called the Fatou set, and denoted by $F(P)$. Slightly more formally, the Fatou set is where the family of iterates locally forms a normal family, and the Julia set is the complement of the Fatou set. The importance of the escaping set is that the definition of the Julia set is a difficult one to check, but the fact that
\[ J(P) = \partial I(P) \]
gives a highly useful way to visualize the Julia set. Critical fixed points of polynomials are called superattracting fixed points and are necessarily in the Fatou set. In particular, if we extend $P$ to the Riemann sphere $\C_{\infty}$, then we see that $\infty$ is a superattracting fixed point of $P$.

For quadratic polynomials, we have a dichotomy in the dynamical behavior. If $0\in BO(P)$, then both $BO(P)$ and $J(P)$ are connected. On the other hand, if $0\in I(P)$ then both $BO(P)$ and $J(P)$ are a Cantor set, that is, a totally disconnected, compact, perfect set. These distinctive behaviors give rise to the definition of the Mandelbrot set in parameter space:
\[ \mathcal{M} = \{c\in \C : 0 \in BO(P_c) \}.\]

B\"ottcher's Theorem allows us to conformally conjugate near a superattracting fixed point of local index $d$ to the power map $z\mapsto z^d$. For quadratic polynomials, as $\infty$ is a superattracting fixed point, we may conformally conjugate to $z\mapsto z^2$ in a neighborhood of $\infty$. More precisely, there exists a conformal map $\varphi_c$ and a neighborhood $W_c$ of $\infty$ such that
\begin{equation}
\label{eq:bottcher} 
\varphi_c ( P_c(z)) = \left [ \varphi_c(z) \right ]^2 
\end{equation}
for $z\in W_c$. It is desirable to extend \eqref{eq:bottcher} to the largest possible domain. The only obstruction to extending $W_c$ to larger domains is if the orbit of the critical point at $0$ escapes. Therefore, if $0\in \mathcal{M}$, then \eqref{eq:bottcher} holds on all of $I(P_c)$, whereas if $0\notin \mathcal{M}$, then we cannot do so.

A resolution to this problem is to use harmonic Green's functions. The real valued function
\[ G_c(z) = \log | \varphi_c(z)| \]
satisfies the functional equation
\begin{equation}
\label{eq:greenpc} 
G_c(P_c(z)) = 2 G_c(z) 
\end{equation}
initially in $W_c$. This time the functional equation allows us to extend the domain of definition of $G_c$ to all of $I(P_c)$, and it turns out that $G_c$ is precisely the Green's function for the exterior domain of $BO(P_c)$ with a pole at $\infty$. In particular, $G_c$ is harmonic and $G_c(z) \to 0$ as $\operatorname{dist}(z,J(P_c)) \to 0$.

The equipotentials $G_c(z) = t$ give a particularly striking picture of the dynamics of $P_c$. If $c\in \mathcal{M}$, then these equipotentials give a foliation of $I(P_c)$ through simple closed curves. On the other hand, if $c\notin \mathcal{M}$, then while the equipotentials for $t$ large enough are simple closed curves, there is a critical value given by $G_c(0) = t_0$ for which the level curve forms a figure-eight shape. For $0<t<t_0$, the level curves are then disconnected.

\subsection{Statement of results}

For mappings of the form \eqref{eq:hktc}, the plane can be again decomposed into the escaping set and the bounded orbit set.

In \cite{FF12}, an analogous result to B\"ottcher's Theorem is established with gives the existence of a quasiconformal map $\varphi_{K,\theta,c}$ which conjugates $H_{K,\theta,c}$ to $H_{K,\theta,0}$ on a neighborhood $W_{K,\theta,c}$ of $\infty$ contained in the escaping set, that is,
\[ \varphi_{K,\theta,c} ( H_{K,\theta,c} (z)) = H_{K,\theta,0} ( \varphi_{K,\theta,c} (z))\]
for $z\in W_{K,\theta,c}$. The question then arises of whether this conjugation can be extended. Once again, the Mandelbrot set in parameter space plays an important role. For $K>1$ and $\theta \in (-\pi/2 , \pi/2]$, we define the Mandelbrot set
\[ \mathcal{M}_{K,\theta} = \{ c\in \C : 0 \in BO(H_{K,\theta,c}) \} .\]
We note that this time there is not an equivalent formulation in terms of the connectedness of the Julia set, as $J(H)$ and $\partial I(H)$ may differ for quasiregular maps, see for example \cite[Example 7.3]{BN14}.

Returning to the B\"ottcher coordinate, it was shown in \cite[Theorem 2.4]{FF12} that if $c\in \mathcal{M}_{K,\theta}$, then $\varphi_{K,\theta,c}$ may be extended to a locally quasiconformal map on all of $I(H_{K,\theta,c})$, whereas if $c\notin \mathcal{M}_{K,\theta}$, then it cannot. Our first main result shows that the Green's function idea also works in this setting.

\begin{theorem}
\label{thm:green} 
Let $K>1$, $\theta \in (-\pi/2 , \pi/2]$ and $c\in \C$.
There exists a non-negative, continuous function $G_{K,\theta,c}:\mathbb{C}\to\mathbb{R}$ that is identically zero on $BO(H_{K,\theta,c})$, non-zero on $I(H_{K,\theta,c})$ and such that 
\[ G_{K,\theta,c}(H_{K,\theta,c}(z))=2G_{K,\theta,c}(z) \]
for all $z\in \C$.
\end{theorem}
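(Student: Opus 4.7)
The plan is to mimic the classical construction of the Green's function for a polynomial, by defining
\[ G_{K,\theta,c}(z) = \lim_{n\to\infty} 2^{-n} \log^+ |H_{K,\theta,c}^n(z)|, \]
where $\log^+ x = \max(\log x, 0)$. Once this limit is known to exist everywhere, the functional equation $G_{K,\theta,c}(H_{K,\theta,c}(z)) = 2 G_{K,\theta,c}(z)$ will follow immediately by shifting the index of the limit.

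The first step is to establish uniform growth estimates. Writing $H = H_{K,\theta,c} = P_c \circ h_{K,\theta}$, the affine map $h_{K,\theta}$ has singular values $1$ and $K$, giving $|z| \leq |h_{K,\theta}(z)| \leq K|z|$, whence
\[ |z|^2 - |c| \;\leq\; |H(z)| \;\leq\; K^2 |z|^2 + |c|. \]
I would then choose a radius $R_0 > 1$ large enough that $\{|z| \geq R_0\}$ is forward invariant under $H$ and contained in $I(H)$, and such that $\log(|H(w)|/|w|^2)$ is uniformly bounded for $|w| \geq R_0$.

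Convergence of the defining limit then splits into two cases. If $z \in BO(H)$, the orbit never enters the forward invariant region $\{|z| \geq R_0\}$, so $2^{-n}\log^+|H^n(z)| \to 0$. If $z \in I(H)$, some iterate enters $\{|z| \geq R_0\}$ and all subsequent iterates remain there; setting $a_n(z) = 2^{-n}\log|H^n(z)|$, the differences
\[ a_{n+1}(z) - a_n(z) = 2^{-(n+1)} \log \frac{|H^{n+1}(z)|}{|H^n(z)|^2} \]
are summable by the uniform bound, so the limit exists. Positivity on $I(H)$ then follows from iterating the lower estimate once $|H^n(z)| \geq R_0$: the moduli grow double-exponentially, forcing $G_{K,\theta,c}(z) \geq 2^{-n-1}\log R_0 > 0$.

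The main obstacle will be continuity. On $I(H)$ it follows from locally uniform convergence: near any $z_0 \in I(H)$, continuity of a fixed iterate $H^{n_0}$ puts a neighborhood of $z_0$ into $\{|z| \geq R_0\}$ after $n_0$ steps, and the telescoping bound above is then uniform on that neighborhood. On the interior of $BO(H)$ the function is identically zero. The delicate case is $z \in \partial I(H)$, which must lie in $BO(H)$ because $I(H)$ is open. For a sequence $z_k \to z$, continuity of each $H^N$ together with the fact that the full orbit of $z$ is trapped in $\{|z|<R_0\}$ forces $|H^N(z_k)| < R_0$ for all $k$ sufficiently large, so the first escape time $n_0(z_k)$ tends to infinity. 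Combined with the uniform estimate $G_{K,\theta,c}(w) \leq 2^{-n_0(w)} C$ for $w \in I(H)$, where $C$ depends only on $K$, $c$, and $R_0$, this yields $G_{K,\theta,c}(z_k) \to 0 = G_{K,\theta,c}(z)$.
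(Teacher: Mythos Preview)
Your argument is correct, but it is \emph{not} the route the paper takes. The paper never writes down the escape-rate limit $\lim_{n\to\infty}2^{-n}\log^{+}|H^{n}(z)|$. Instead it proceeds in two stages. First, for $c=0$ it exploits the fact that $BO(H_{K,\theta,0})$ is starlike about the origin: each ray $R_\phi$ meets $\partial I(H_{K,\theta,0})$ in a single point $b_{K,\theta}(\phi)$, and the radial gauge $\tau_{K,\theta,0}(z)=|z|/|b_{K,\theta}(\arg z)|$ is shown to satisfy $\tau_{K,\theta,0}\circ H_{K,\theta,0}=\tau_{K,\theta,0}^{2}$ directly from the homogeneity $H_{K,\theta,0}(rz)=r^{2}H_{K,\theta,0}(z)$. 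Second, for general $c$ the paper invokes the B\"ottcher coordinate $\varphi_{K,\theta,c}$ from \cite{FF12} conjugating $H_{K,\theta,c}$ to $H_{K,\theta,0}$ near $\infty$, sets $G_{K,\theta,c}=\log(\tau_{K,\theta,0}\circ\varphi_{K,\theta,c})$ on that neighbourhood, and then extends to all of $I(H_{K,\theta,c})$ by pulling back through the functional equation $G\circ H=2G$. Continuity at $\partial I$ is obtained by a compactness bound on one fundamental annulus, essentially the same endgame as yours.

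Your approach is more elementary and entirely self-contained: it uses only the crude two-sided bound $|z|^{2}-|c|\le|H(z)|\le K^{2}|z|^{2}+|c|$ and avoids both the starlike lemma and the B\"ottcher theorem. The paper's construction costs more machinery but buys an explicit factorisation $G_{K,\theta,c}=(\log\tau_{K,\theta,0})\circ\varphi_{K,\theta,c}$ near infinity, which is then used downstream: in Lemma~\ref{lem:eubdry} to rule out local maxima of $G$, and in Lemma~\ref{lem:euprops}(a) to see immediately that the equipotentials $E(t)$ are simple closed curves for large $t$ (as quasiconformal images of the level sets of $\tau_{K,\theta,0}$). With your definition those facts would need to be argued separately.
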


We do not expect $G_{K,\theta,c}$ to be harmonic, although we leave the question of the regularity of this function to future work.
However, $G_{K,\theta,c}$ still has equipotentials that are useful dynamically. For $t>0$, let us define 
\[ E(t) = E_{K,\theta,c}(t) = \{z\in \C : G_{K,\theta,c}(z) = t \},\]
and
\[ U(t) = U_{K,\theta,c}(t) = \{ z\in \C : G_{K,\theta,c}(z) > t \}.\]
Using these notions, we will give an alternative proof and mild refinement of \cite[Theorem 5.3 and 5.4]{FG10}.

\begin{theorem}
\label{thm:conn}
Let $K>1$, $\theta \in (-\pi/2,\pi/2]$ and $c\in \C$.
If $c\in \mathcal{M}_{K,\theta}$, then $\partial I(H_{K,\theta,c})$ is connected. If $c\notin \mathcal{M}_{K,\theta}$, then $\partial I(H_{K,\theta,c})$ has uncountably many components.
\end{theorem}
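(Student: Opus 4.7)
The plan is to analyze the super-level sets $U(t) = U_{K,\theta,c}(t)$ of the Green's function $G := G_{K,\theta,c}$ using the functional equation from Theorem \ref{thm:green}, which yields $H^{-1}(U(2t)) = U(t)$, together with Riemann-Hurwitz applied to the degree two map $H = H_{K,\theta,c}$ (whose only critical values are $c = H(0)$ and $\infty = H(\infty)$). The dichotomy turns on whether $c$ ever enters $U(t)$, i.e., whether $c \in I(H)$, equivalently $c \notin \mathcal{M}_{K,\theta}$.

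\textbf{Case $c \in \mathcal{M}_{K,\theta}$.} Here $0 \in BO(H)$, so $G(0) = 0$ and therefore $G(c) = 2G(0) = 0$, giving $c \notin U(t)$ for every $t > 0$. For $t$ sufficiently large, $U(t) \cup \{\infty\}$ is a topological disk in $\C_{\infty}$ by the quasiconformal B\"ottcher coordinate of \cite{FF12}. Given $U(2t) \cup \{\infty\}$ a topological disk avoiding $c$, the map $H : U(t) \to U(2t)$ is a degree two branched cover ramified only over $\infty$, so Riemann-Hurwitz gives $\chi(U(t) \cup \{\infty\}) = 2 - 1 = 1$, and $U(t) \cup \{\infty\}$ is again a topological disk. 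Iterating this pullback, every $t > 0$ is reached after finitely many halvings, so $U(t) \cup \{\infty\}$ is a topological disk for all $t > 0$. Hence $I(H) \cup \{\infty\} = \bigcup_{t > 0} (U(t) \cup \{\infty\})$ is an increasing union of nested open disks, which is a simply connected domain in $\C_{\infty}$, and its boundary $\partial I(H)$ is therefore connected.

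\textbf{Case $c \notin \mathcal{M}_{K,\theta}$.} Now $0 \in I(H)$, so $t_0 := G(0) > 0$ and $G(c) = 2t_0$. Set $V_n := \{G \leq t_0/2^n\}$, so $V_n = H^{-1}(V_{n-1})$ and $\partial V_n \subset \{G = t_0/2^n\} \subset I(H)$. As in the previous case, $V_0$ is a closed topological disk. But this time $c \in U(t_0)$, so $H : U(t_0/2) \to U(t_0)$ is ramified at both critical points and Riemann-Hurwitz gives $\chi(U(t_0/2) \cup \{\infty\}) = 2 - 2 = 0$; hence $U(t_0/2) \cup \{\infty\}$ is a topological annulus and $V_1$ splits into two disjoint closed disks. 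For $n \geq 1$, neither critical point nor critical value lies in $V_n$ (both have $G$-value at least $t_0 > t_0/2^n$), so $H : V_{n+1} \to V_n$ is an unbranched degree two cover of a disjoint union of simply connected disks, and each component of $V_n$ lifts to two disjoint disks. Inductively, $V_n$ has $2^n$ closed topological disk components $\{V_n^{\sigma} : \sigma \in \{0,1\}^n\}$ with $V_{n+1}^{\sigma 0}, V_{n+1}^{\sigma 1} \subset V_n^{\sigma}$.

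To extract uncountably many components of $\partial I(H)$, I code by this binary tree. Since $\partial I(H) \subset BO(H) \subset V_n$, every component $C$ of $\partial I(H)$ lies in a unique component of $V_n$ at each level, and compatibility with the nesting assembles these labels into a sequence $\tau(C) \in \{0,1\}^{\N}$. Components with different $\tau$'s are distinct because the Jordan curves $\partial V_{n+1}^{\sigma 0}, \partial V_{n+1}^{\sigma 1} \subset I(H)$ separate the two child disks inside $V_n^{\sigma}$ and no connected subset of $BO(H)$ can cross them. Conversely, every sequence is realized: each $V_n^{\sigma}$ meets both $BO(H)$ (by Cantor intersection along any extension of $\sigma$) and $I(H)$ (along its boundary), and joining these by a path inside the disk $V_n^{\sigma}$ and applying the intermediate value theorem to $G$ produces a point of $\partial I(H)$ inside $V_n^{\sigma}$; the nested nonempty compact sets $V_n^{\tau_0 \dots \tau_{n-1}} \cap \partial I(H)$ then have nonempty intersection by compactness. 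Uncountably many sequences yield uncountably many components of $\partial I(H)$. The main obstacle throughout is justifying that $U(t) \cup \{\infty\}$ is genuinely a topological disk at each pullback stage from only the continuity of $G$; this is secured by using the quasiconformal B\"ottcher disk of \cite{FF12} as the base case and propagating via the Riemann-Hurwitz calculations above.
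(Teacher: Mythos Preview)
Your approach is essentially the same as the paper's: both analyze the super-level sets of the Green's function via Riemann--Hurwitz and, in the disconnected case, code components by a binary tree built from two inverse branches. One technical point deserves attention: your choice $V_0 = \{G \le t_0\}$ places the critical point $0$ exactly on the boundary $E(t_0)$, which is then a figure-eight rather than a Jordan curve, so $V_0$ is not a closed topological disk and Theorem~\ref{thm:rh} (which requires boundaries consisting of simple closed curves) does not directly apply to the step $H:U(t_0/2)\cup\{\infty\}\to U(t_0)\cup\{\infty\}$. The paper sidesteps this by starting at a level $t_1\in(t_0,2t_0)$, so that $E(t_1)$ is a genuine simple closed curve and $E(t_1/2)$ consists of two simple closed curves; the same shift repairs your argument with no other changes. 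Apart from this, your write-up is if anything more careful than the paper's in distinguishing components of $\partial I(H)$ from components of $BO(H)$ and in verifying that every binary address is actually realized.
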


Recalling the dichotomy that $J(P_c)$ is either connected or a Cantor set, we now turn to the question of what happens when $\partial I(H_{K,\theta,c})$ has uncountably many components. First, we give a condition that ensures that $\partial I(H_{K,\theta,c})$ is not a Cantor set. Recall that a periodic point of period $n$ for $f$ is a solution of $f^n(z) = z$. Note that a fixed point is considered a periodic point of all periods.

\begin{theorem}
\label{thm:perpts}
Let $K>1$, $\theta \in (-\pi/2,\pi/2]$ and $c\notin \mathcal{M}_{K,\theta}$. Suppose there exists $n\in \N$ such that $H_{K,\theta,c}$ has at least $2^n+1$ periodic points of period $n$. Then $\partial I(H_{K,\theta,c})$ is not a Cantor set.
\end{theorem}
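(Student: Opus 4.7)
The plan is to argue the contrapositive: assuming $\partial I(H_{K,\theta,c})$ is a Cantor set, I show that for each $n\in\N$ the map $H:=H_{K,\theta,c}$ has exactly $2^n$ periodic points of period $n$, contradicting the hypothesis of at least $2^n+1$. First I localise the periodic points in $\partial I$. Every periodic point has a bounded orbit, so lies in $BO(H)$. Because $\infty$ is a super-attracting fixed point, $I(H)$ is open and $BO(H)$ is closed and bounded with $\partial BO=\partial I$. If $BO$ had nonempty interior, it would contain a bounded open connected component $V$, and $\partial V\subseteq\partial I$ would be nonempty and compact. But $\C\setminus\partial V\supseteq V\sqcup(\C\setminus\overline{V})$ exhibits $\C\setminus\partial V$ as a union of two disjoint nonempty open sets, whereas the planar complement of a totally disconnected compact set is connected. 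Hence $\partial V$ is not totally disconnected, contradicting $\partial I$ being a Cantor set. So $BO=\partial I$ and every periodic point of $H$ lies in $\partial I$.

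Next I set up a two-symbol coding of $\partial I$ using the Green's function $G:=G_{K,\theta,c}$. Since $c\notin \mathcal{M}_{K,\theta}$, the critical point $0$ escapes, so $t_0:=G(0)>0$. For $t$ slightly below $t_0$ I expect the sub-level set $\{z\in\C:G(z)<t\}$ to split into two bounded open components $V_0,V_1$, each meeting $\partial I$, with the critical point $0$ sitting at the pinched common boundary at level $t_0$; this mirrors the figure-eight disconnection of the quadratic polynomial case and is exactly what underlies the $c\notin \mathcal{M}_{K,\theta}$ side of Theorem \ref{thm:conn}. The functional equation $G\circ H=2G$ then gives $H^{-1}(\{z\in\C:G(z)<2t\})=V_0\sqcup V_1$, and since the only finite critical point $0$ lies outside each $V_i$ while $H$ has topological degree two, each restriction $H:V_i\to \{z\in\C:G(z)<2t\}$ is an orientation-preserving homeomorphism with a continuous inverse $f_i$.

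Because $H$ is uniformly expanding on a neighbourhood of $\partial I$, the maps $f_0,f_1$ are contracting and form an iterated function system whose attractor satisfies $\partial I=f_0(\partial I)\sqcup f_1(\partial I)$, with the union disjoint since $V_0\cap V_1=\emptyset$. The standard address map $\pi:\{0,1\}^{\N}\to\partial I$ is then a continuous surjection intertwining the one-sided shift $\sigma$ with $H|_{\partial I}$, and disjointness of the two pieces forces $\pi$ to be injective, hence a homeomorphism. The shift $\sigma$ has exactly $2^n$ periodic points of period $n$, so the same is true of $H|_{\partial I}$; combined with the localisation from the first paragraph, $H$ itself has exactly $2^n$ periodic points of period $n$, contradicting the hypothesis.

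The hard part will be making the two-component decomposition $V_0,V_1$ rigorous. In the polynomial case harmonicity of $G_c$ renders its equipotentials smooth curves and the figure eight transparent, but the paper explicitly flags that $G_{K,\theta,c}$ need not be harmonic, so equipotentials may be badly behaved. The construction must therefore be genuinely topological, leaning on the functional equation $G\circ H=2G$, the single finite critical point of $H$, and the disconnection machinery developed in the proof of Theorem \ref{thm:conn}, rather than on any direct regularity of $G$.
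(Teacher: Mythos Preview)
Your overall strategy---localise periodic points in $\partial I$, split into two inverse branches $f_0,f_1$, and code $\partial I$ by the full shift---is sound, but there is a genuine gap at the step ``Because $H$ is uniformly expanding on a neighbourhood of $\partial I$, the maps $f_0,f_1$ are contracting.'' You assert this without proof, and the paper gives strong evidence that it is not automatic: Theorem~\ref{thm:examples}(b) exhibits parameters with $c\notin\mathcal{M}_{K,\theta}$ for which $H$ has an \emph{attracting} fixed point in $BO$, so $H$ is certainly not expanding there. In that example $\partial I$ happens not to be a Cantor set, so it does not directly refute your contrapositive hypothesis, but it shows that $c\notin\mathcal{M}_{K,\theta}$ alone gives no expansion, and you have offered no argument that the additional Cantor assumption forces it. Without contraction you have not shown that the nested intersections $\bigcap_n f_{i_1}\circ\cdots\circ f_{i_n}(\overline{V})$ are singletons, so your address map $\pi$ is not yet well-defined as a map to points.

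The gap is repairable without any dynamics: each such nested intersection is a decreasing intersection of compact \emph{connected} sets (the $f_i$ are homeomorphisms, so they preserve connectedness), hence is itself compact and connected; since you are assuming $\partial I$ is totally disconnected, each intersection must be a singleton. That, together with the disjointness you already noted, makes $\pi$ a homeomorphism and finishes your argument. By contrast, the paper's proof avoids building the full conjugacy altogether: it simply pigeonholes the $2^n+1$ fixed points of $H^n$ into the $2^n$ level-$n$ pieces $X_w$, finds two fixed points $z_1,z_2$ in a common piece $X_w$, and observes that $\{z_1,z_2\}\subset\bigcap_{k\ge 1} w^k(X_0)$, exhibiting a nondegenerate connected subset of $BO$. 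This is shorter and never touches expansion or a coding map; your route, once patched, yields the stronger conclusion that a Cantor $\partial I$ forces $H|_{\partial I}$ to be topologically conjugate to the one-sided $2$-shift.
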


Finally, we aim to illustrate that the situation in Theorem \ref{thm:perpts} can indeed occur and exhibit some of the other features that can occur for the dynamics of these mappings that contrast with the dynamics of quadratic polynomials.

\begin{theorem}
\label{thm:examples}
We may choose parameters $K>1$, $\theta \in (-\pi/2,\pi/2]$ and $c\in \C$ such that each of the following cases may occur:
\begin{enumerate}[(a)]
\item $H_{K,\theta,c}$ has either two, three or four fixed points in $\C$, and four is the maximum possible.
\item $H_{K,\theta,c}$ has an attracting fixed, yet $c\notin \mathcal{M}_{K,\theta}$, that is, there is an attracting fixed point in one of uncountably many components of the bounded orbit set.
\item Given $K,\theta$, there exist parameters $c\in \C$ such that $H_{K,\theta,c}$ has a saddle fixed point. Moreover, there exist parameters $K,\theta,c$ such that $H_{K,\theta,c}$ has a saddle fixed point $z_0$, and the intersection of the component of $BO(H_{K,\theta,c})$ containing $z_0$ with an open neighbourhood of $z_0$ is a smooth curve.
\end{enumerate}
\end{theorem}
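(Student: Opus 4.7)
The proof has three essentially independent parts. Since we are asked for existence of parameters, each part reduces to an explicit construction together with a verification computation. Throughout, write $\alpha = (K+1)/2$, $\beta = e^{2i\theta}(K-1)/2$, so that the fixed point equation $H_{K,\theta,c}(z)=z$ becomes $(\alpha z + \beta \bar z)^2 = z - c$, and the (real) differential of $H_{K,\theta,c}$ at $z$ is the real-linear map $v \mapsto 2(\alpha z + \beta \bar z)(\alpha v + \beta \bar v)$.

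\textbf{Part (a): counting fixed points.} The fixed point equation, viewed in real coordinates $(x,y)$, is a pair of real polynomial equations each of total degree $2$, so by B\'ezout the number of isolated fixed points is at most $4$, giving the upper bound. To exhibit each of $2,3,4$, I would specialize to $\theta=0$ and real $c$. Then $h_{K,0}(x+iy)=Kx+iy$, so the fixed point equation splits as $y(2Kx-1)=0$ together with $K^2x^2-y^2 = x-c$. The case $y=0$ gives $K^2 x^2 - x + c = 0$, which has two real roots iff $c \le 1/(4K^2)$, while $x=1/(2K)$ gives $y^2 = c + 1/4 - 1/(2K)$, which has two real roots iff $c \ge 1/(2K) - 1/4$. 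Since $1/(2K) - 1/4 < 1/(4K^2)$ for $K>1$, the count is $2$ for $c < 1/(2K) - 1/4$ or $c > 1/(4K^2)$, equal to $4$ for $c$ strictly between, and equal to $3$ exactly when $c = 1/(4K^2)$ (a double real root at $(1/(2K^2),0)$ together with two simple roots at $(1/(2K),\pm(K-1)/(2K))$). These are the three required examples.

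\textbf{Part (b): attracting fixed point with $c \notin \mathcal M_{K,\theta}$.} The obstruction for polynomials is Fatou's theorem that every attracting cycle traps a critical point; this uses holomorphic normal family arguments that fail in the quasiregular setting, so the dichotomy can break. The strategy is to exhibit $K,\theta,c$ so that the differential at some fixed point $z_0$ has both eigenvalues of modulus strictly less than $1$, while at the same time the orbit of the branch point $0$ escapes. The eigenvalues of the real-linear map above are $\mathrm{Re}(A) \pm \sqrt{|B|^2 - \mathrm{Im}(A)^2}$ with $A = 2\alpha(\alpha z_0+\beta \bar z_0)$ and $B = 2\beta(\alpha z_0+\beta\bar z_0)$. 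Picking $K$ large and $\theta \in (0,\pi/2)$ suitably, the rotational part of $h_{K,\theta}$ can align so that $0$ is thrown far from the attracting basin of a small-modulus fixed point $z_0$; the iterates $H^n(0)$ then eventually enter a region $\{|z|>R\}$ on which a simple expansion estimate (coming from $H(z) \sim (\alpha z + \beta \bar z)^2$ for large $|z|$) forces $|H^n(0)|\to\infty$. The actual numerical verification is straightforward but needs to be done explicitly; this is the main obstacle, since the construction is not generic and one cannot avoid identifying a concrete parameter triple and checking by direct iteration (or by invoking Theorem~\ref{thm:conn} to certify $c \notin \mathcal{M}_{K,\theta}$ via the disconnectedness of $\partial I$).

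\textbf{Part (c): saddle fixed points and smooth local components.} For the first claim, again work with $\theta = 0$, real $c$: the real fixed point $x$ on $y=0$ has eigenvalues $2Kx$ and $2K^2x$, so it is a saddle precisely when $1/(2K^2) < |x| < 1/(2K)$. This interval is nonempty for every $K>1$, and the roots $x_\pm = (1 \pm \sqrt{1-4K^2c})/(2K^2)$ of $K^2x^2 - x + c = 0$ can be placed there by choosing $c$ appropriately (e.g.\ $K=5$, $c=-0.1$ as in Figure~\ref{fig:5}). For the second claim, fix such a saddle $z_0 \neq 0$. Because $H_{K,\theta,c}$ is a polynomial in $z$ and $\bar z$, it is real-analytic in a neighborhood of $z_0$; since $z_0$ is hyperbolic with one contracting and one expanding eigenvalue, the stable/unstable manifold theorem produces smooth local manifolds $W^s_{\text{loc}}(z_0), W^u_{\text{loc}}(z_0)$, each a smooth curve through $z_0$. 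By definition $W^s_{\text{loc}}(z_0) \subset BO(H_{K,\theta,c})$, and it is connected, so it lies inside the component of $BO$ through $z_0$; the reverse inclusion (locally) is what needs work. Choosing the parameters so that every point of $W^u_{\text{loc}}(z_0) \setminus \{z_0\}$ belongs to $I(H_{K,\theta,c})$ — verifiable by direct iteration for a specific parameter choice — and then applying the Hartman–Grobman-type local product structure near a hyperbolic saddle, any point in a small neighborhood $U$ of $z_0$ off $W^s_{\text{loc}}(z_0)$ is pushed after finitely many iterations onto a curve arbitrarily close to $W^u_{\text{loc}}(z_0)$, hence into $I$. Therefore $BO \cap U \subset W^s_{\text{loc}}(z_0)$, giving equality and producing the required smooth-curve description. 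The main obstacle here is the last step: certifying that the unstable manifold lies in the escaping set, which requires a global (not purely local) argument tailored to the chosen parameters.
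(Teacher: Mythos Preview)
Your Part (a) is essentially the paper's argument (B\'ezout for the upper bound, $\theta=0$ with real $c$ for the explicit counts); only a minor slip is that three fixed points also occur at the left endpoint $c=1/(2K)-1/4$, not just at $c=1/(4K^2)$.

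In Part (b) there is a genuine gap: you describe a strategy but do not produce parameters, and you correctly flag that the numerical verification is the whole content of this part. The paper simply writes down $K=1/2$, $\theta=0$, $c=-3/2-i/2$ (conjugate to $K=2$, $\theta=\pi/2$), computes an approximate fixed point $z_0\approx -1.195-0.228i$ with eigenvalues of modulus $\approx 0.904$, and then checks $|H^8(0)|>8$, which by the known bound $\mathcal{M}_{1/2,0}\subset\{c:|H^n(0)|\le 8\ \forall n\}$ certifies $c\notin\mathcal{M}_{1/2,0}$. Your heuristic ``take $K$ large'' is not the direction the paper goes, and without an explicit triple the claim is not established.

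In Part (c), your first claim matches the paper. For the second claim your plan is on the right track (Hartman--Grobman, $W^s_{\mathrm{loc}}\subset BO$, then show the reverse inclusion locally), but the obstacle you isolate---proving $W^u_{\mathrm{loc}}\setminus\{z_0\}\subset I$ ``by direct iteration''---is both the crux and not really checkable pointwise. The paper sidesteps it with a structural observation you are missing: for $\theta=0$ and $c\in\R$ the real axis is invariant, and $H_{5,0,-0.1}|_{\R}(x)=25x^2-0.1$ is conjugate to $z\mapsto z^2-2.5$, whose Julia set is a Cantor subset of $\R$. Hence $BO\cap\R$ is a Cantor set, so there are escaping points on $\R$ arbitrarily close to $z_0$ on both sides. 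Since $I$ is open and path-connected, each such point lies on a curve in $I$ reaching $\infty$; pulling these curves back by $A^{-m}$ in the linearized coordinates (where $A$ contracts the real direction and expands the imaginary one under inversion) produces curves in $I$ accumulating on $W^s_{\mathrm{loc}}$ from both sides. That forces the component of $BO$ through $z_0$, intersected with the linearizing neighborhood, to equal $W^s_{\mathrm{loc}}$. This replaces your proposed numerical verification of $W^u\subset I$ by a clean global input (the Cantor structure on $\R$) and makes the argument rigorous.
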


\begin{figure}[h]
\begin{center}
\includegraphics[width=5in]{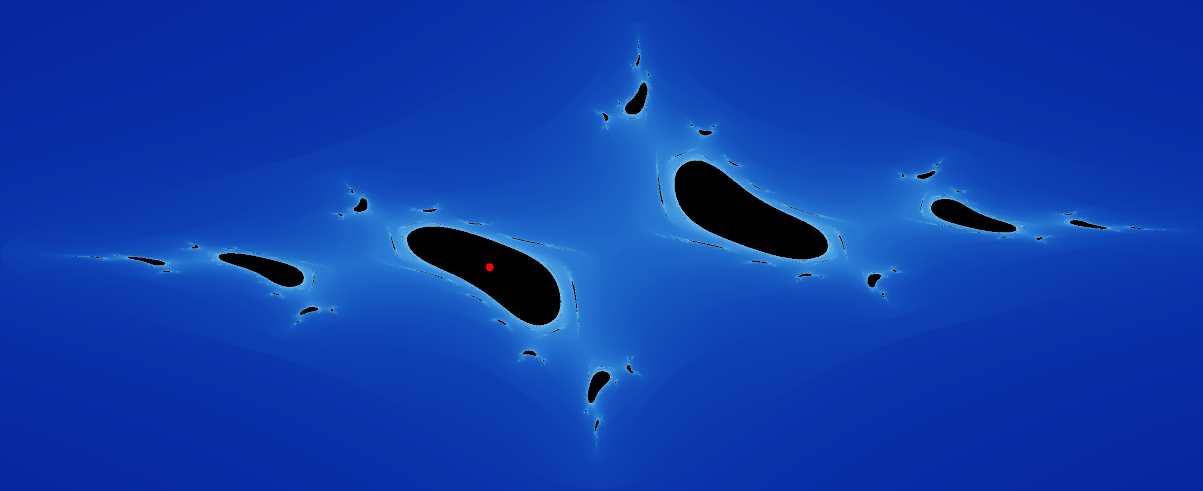}
\end{center}
\caption{The dynamical plane of $H_{1/2,0,-3/2-i/2}$ with the attracting fixed point $z_0$ marked.}
\label{fig:3}
\end{figure}

Figure \ref{fig:3} illustrates an example of the second case in Theorem \ref{thm:examples}, and Figure \ref{fig:5} illustrates an example of the third case.

The paper is organized as follows. In Section \ref{sec:prelims}, we recall preliminary material on quasiregular mappings in the plane, and known material on the dynamics of the mappings $H_{K,\theta,c}$. In Section \ref{sec:greens} we prove Theorem \ref{thm:green} by constructing our analogue of a Green's function. In Section \ref{sec:equi} we use equipotentials for our Green's functions to establish Theorem \ref{thm:conn}. In Section \ref{sec:fixed} we prove Theorem \ref{thm:perpts}. In Section \ref{sec:classify}, we give a classification of the type of fixed point of $H_{K,0,c}$ based on its location and finally in Section \ref{sec:examples} we use the classification to help establish the examples in Theorem \ref{thm:examples}.

\section{Preliminaries}
\label{sec:prelims}

\subsection{Quasiregular mappings}

A quasiconformal mapping $f:\C \to\C$ is a homeomorphism such that $f$ is in the Sobolev space $W^1_{2,loc}(\C)$ and there exists $k\in [0,1)$ such that the complex dilatation $\mu_f = f_{\zbar}/f_z$ satisfies
\[ |\mu_f(z)| \leq k \]
almost everywhere in $\C$. The dilatation at $z\in \C$ is
\[ K_f(z) := \frac{1+|\mu_f(z)|}{1-|\mu_f(z)|}.\]
A mapping is called $K$-quasiconformal if $K_f(z) \leq K$ almost everywhere. The smallest such constant is called the maximal dilatation and denoted by $K(f)$. If we drop the assumption on injectivity, then $f$ is called a quasiregular mapping. 
We refer to the books of Rickman \cite{Ric93} and Iwaniec and Martin \cite{IM01} for much more on the development of the theory of quasiregular mappings.

In the plane, we have the following two crucial results. First, there is a surprising correspondence between quasiconformal mappings and measurable functions, see for example \cite[p.8]{IM01}.

\begin{theorem}[Measurable Riemann Mapping Theorem]
Suppose that $\mu \in L^{\infty}(\C)$ with $||\mu||_{\infty} \leq k <1$. Then there exists a quasiconformal map $f:\C \to \C$ with complex dilatation equal to $\mu$ almost everywhere. Moreover, $f$ is unique if it fixes $0,1$ and $\infty$.
\end{theorem}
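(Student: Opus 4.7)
The plan is to follow the classical Ahlfors--Bers approach via singular integral operators, first for $\mu$ of compact support and then by a decomposition argument in general. The idea is to recast the Beltrami equation $f_{\bar z} = \mu f_z$ as a fixed-point problem on $L^p(\C)$, solve it by a Neumann series, and recover $f$ by integration against the Cauchy kernel.

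Concretely, I introduce the Cauchy transform $(Pg)(z) = -\frac{1}{\pi}\int_\C \frac{g(w)}{w-z}\,dA(w)$ and the Beurling--Ahlfors transform $(Tg)(z) = -\frac{1}{\pi}\operatorname{p.v.}\int_\C \frac{g(w)}{(w-z)^2}\,dA(w)$, satisfying $\partial_{\bar z}Pg = g$ and $\partial_z Pg = Tg$ distributionally. Looking for a solution of the form $f(z) = z + (Ph)(z)$ with $h\in L^p(\C)$ to be determined, the Beltrami equation becomes the fixed-point problem $h = \mu + \mu T h$. Now $T$ is an isometry on $L^2$, and by Calder\'on--Zygmund theory it is bounded on $L^p$ for $1<p<\infty$ with $\|T\|_{p\to p}\to 1$ as $p\to 2$. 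Since $\|\mu\|_\infty \le k < 1$, I fix $p>2$ close enough to $2$ so that $k\|T\|_{p\to p}<1$; the Neumann series then produces a unique $h = (I - \mu T)^{-1}\mu \in L^p$, supported in $\operatorname{supp}\mu$. The resulting $f(z) = z + Ph(z)$ is H\"older continuous of exponent $1-2/p$, satisfies $f(z) = z + O(1/|z|)$ at infinity, and is a distributional solution of $f_{\bar z} = \mu f_z$ with $|\mu_f|\le k$ almost everywhere.

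The main obstacle is upgrading this weak solution to a genuine quasiconformal homeomorphism. I would approximate $\mu$ by smooth, compactly supported $\mu_n$ with $\|\mu_n\|_\infty \le k$ and $\mu_n \to \mu$ pointwise a.e.; for smooth $\mu_n$, classical elliptic theory produces $C^\infty$ solutions $f_n$, each of them a diffeomorphism because the Jacobian $(1-|\mu_n|^2)|(f_n)_z|^2$ is strictly positive. The $L^p$ continuity of the inverse $(I-\mu T)^{-1}$ under perturbations of $\mu$ gives $h_n \to h$ in $L^p$, whence $f_n \to f$ locally uniformly. The normal-family theory for $K$-quasiconformal mappings then shows the limit of such maps with a common dilatation bound is either constant or $K$-quasiconformal; the normalization at infinity excludes the constant case, so $f$ is a quasiconformal homeomorphism of $\C$ extending to fix $\infty$ on the Riemann sphere.

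For uniqueness, if $f, g$ are two normalized solutions, then the chain rule for Beltrami coefficients gives $\mu_{f\circ g^{-1}} \equiv 0$ a.e., so $f\circ g^{-1}$ is a conformal self-map of the sphere fixing three points, hence the identity. Normalization to fix $0$, $1$, and $\infty$ is achieved by post-composing the solution constructed above with the affine map $z\mapsto (z-f(0))/(f(1)-f(0))$, which is conformal and so preserves the complex dilatation. When $\mu$ is not compactly supported, I decompose $\mu = \mu_1 + \mu_2$ with $\mu_1$ supported in a large disk $\D_R$ and $\mu_2$ in its complement; solving for $\mu_1$ yields a quasiconformal $f_1$, and after a change of coordinates $z\mapsto 1/z$ the exterior-supported part becomes compactly supported and can be treated analogously, with the two solutions composed to produce the general case.
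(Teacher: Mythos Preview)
The paper does not prove this statement: the Measurable Riemann Mapping Theorem is quoted in Section~\ref{sec:prelims} as a background result with a reference to \cite[p.~8]{IM01}, and no argument is given. So there is nothing in the paper to compare your proof against.

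That said, your sketch is the standard Ahlfors--Bers proof and is essentially correct through the compactly supported case and the uniqueness argument. The one genuine gap is in your final paragraph. You write that for general $\mu$ you ``decompose $\mu = \mu_1 + \mu_2$'' with disjoint supports, solve separately, and compose. But Beltrami coefficients are not additive under composition: if $\mu_{f_1}=\mu_1$ and $\mu_{f_2}=\mu_2$, then $\mu_{f_2\circ f_1}$ is given by the nonlinear composition formula
\[
\mu_{f_2\circ f_1}(z)=\frac{\mu_1(z)+\bigl(\mu_2\circ f_1\bigr)(z)\,\overline{(f_1)_z}/(f_1)_z}{1+\overline{\mu_1(z)}\,\bigl(\mu_2\circ f_1\bigr)(z)\,\overline{(f_1)_z}/(f_1)_z},
\]
not $\mu_1+\mu_2$. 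The correct reduction is: solve first for $f_1$ with dilatation $\mu\chi_{\{|z|\le R\}}$; then determine the coefficient $\nu$ that $f_2$ must carry so that $f_2\circ f_1$ has dilatation $\mu$ everywhere, using the formula above. This $\nu$ is supported in $f_1(\{|z|>R\})$, and after the inversion $z\mapsto 1/z$ it becomes compactly supported, so your earlier argument applies. Alternatively, one can bypass this entirely by truncating $\mu$ to $\mu_R=\mu\chi_{\{|z|<R\}}$, solving for each $R$, and passing to a normal-family limit as $R\to\infty$. Either route closes the gap; as written, your last step does not.
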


Moreover, every quasiregular mapping has an important decomposition, see for example \cite[p.254]{IM01}.

\begin{theorem}[Stoilow Decomposition Theorem]
\label{thm:stoilow}
Let $f:\C\to \C$ be a quasiregular mapping. Then there exists a holomorphic function $g$ and a quasiconformal mapping $h$ such that $f=g\circ h$.
\end{theorem}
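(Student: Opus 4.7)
The plan is to reduce the decomposition to the Measurable Riemann Mapping Theorem by absorbing the complex dilatation of $f$ into a quasiconformal factor. Since $f$ is quasiregular, its Beltrami coefficient $\mu_f := f_{\zbar}/f_z$ lies in $L^{\infty}(\C)$ with $\|\mu_f\|_{\infty} \leq k < 1$. Applying the Measurable Riemann Mapping Theorem to $\mu_f$, one obtains a quasiconformal homeomorphism $h:\C\to\C$ with $\mu_h = \mu_f$ almost everywhere. Setting $g := f \circ h^{-1}$, the claim to establish is that $g$ is entire, which immediately gives the required decomposition $f = g \circ h$.

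The first step is to observe that $h^{-1}$ is quasiconformal with the same maximal dilatation as $h$, so $g$ is quasiregular and lies in $W^1_{2,loc}(\C)$. The heart of the argument is the Beltrami calculation: applying the chain rule to $f = g \circ h$, using that $h$ is a.e.\ differentiable with $J_h > 0$ (so $h_z \neq 0$ a.e.), yields
\[ \mu_f(z) = \frac{\mu_h(z) + \overline{h_z(z)}\, h_z(z)^{-1}\, \mu_g(h(z))}{1 + \overline{\mu_h(z)}\, \overline{h_z(z)}\, h_z(z)^{-1}\, \mu_g(h(z))} \]
at almost every $z \in \C$. Substituting $\mu_h = \mu_f$ and using $|\mu_h| < 1$, the identity reduces to $\mu_g(h(z)) \cdot \overline{h_z(z)}/h_z(z) \cdot (1 - |\mu_h|^2) = 0$ a.e., which forces $\mu_g \circ h = 0$ a.e.\ on $\C$. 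Lusin's $N$-property for $h$ then transfers this to $\mu_g = 0$ a.e.\ on $\C$. Combined with $g \in W^1_{2,loc}(\C)$, the equation $g_{\bar{w}} = 0$ in the distributional sense upgrades $g$ to a holomorphic function via Weyl's lemma.

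The main technical obstacle is justifying the chain rule step at almost every point. This requires the standard package of quasiconformal regularity for $h$: absolute continuity on lines so that the pointwise chain rule is valid a.e., a.e.\ differentiability with positive Jacobian so that the denominators $h_z$ and $g_w$ are nonzero on the relevant set, and Lusin's $N$-property for both $h$ and $h^{-1}$ so that an a.e.\ statement in the $z$-plane is equivalent to an a.e.\ statement in the $w$-plane. These properties are classical for planar quasiconformal maps \cite{IM01}, so the computation itself is routine once they are invoked; the only delicate point is bookkeeping the null sets so that the Beltrami identity holds consistently after the change of variables.
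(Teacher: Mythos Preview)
The paper does not prove this theorem at all; it simply quotes it as a known result with a reference to \cite[p.254]{IM01}. Your argument is the standard one found in that reference and elsewhere: solve the Beltrami equation for $\mu_f$ to produce the quasiconformal factor $h$, then verify via the composition formula for complex dilatations that $g=f\circ h^{-1}$ has vanishing Beltrami coefficient and invoke Weyl's lemma. The computation and the handling of null sets are correct, so there is nothing to object to; you have supplied exactly the proof the paper chose to outsource.
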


In fact, in the plane, some sources use this decomposition as the definition of a quasiregular mapping. However, this approach does not generalize to higher dimensions, so even though this paper is two dimensional, we will keep Stoilow Decomposition as a theorem.

\subsection{The mappings $H_{K,\theta,c}$}

Here we review the properties of the mappings we will focus on in this paper. Let $K>1$ and $\theta \in (-\pi/2 , \pi/2]$. Then we define $h_{K,\theta}$ to be the stretch by factor $K$ in the direction of $e^{i\theta}$. Evidently, if $\theta =0$, then 
\[ h_{K,0}(x+iy) = Kx+iy.\]
Interpreting $h_{K,\theta}$ in terms of $h_{K,0}$, if $\rho_{\theta}$ denotes the rotation counter-clockwise by angle $\theta$, then we see that 
\[ h_{K,\theta} = \rho_{\theta} \circ h_{K,0} \circ \rho_{-\theta}.\]
From this we can obtain the following explicit formulas for $h_{K,\theta}$:
\begin{align*}
h_{K,\theta}(z) &= \left ( \frac{K+1}{2} \right ) z + e^{2i\theta} \left (\frac{K-1}{2} \right ) \zbar \\
&= \left [ x(K\cos^2(\theta)+\sin^2(\theta))+y(K-1)\sin(\theta)\cos(\theta) \right ] \\
& \hskip0.5in + i\left [ x(K-1)\cos(\theta)\sin(\theta)+y(K\sin^2(\theta)+\cos^2(\theta))\right ].
\end{align*}
It is clear that
\[ \mu_{h_{K,\theta}} \equiv e^{2i\theta} \left ( \frac{K-1}{K+1} \right ),\]
and it follows from the Measurable Riemann Mapping Theorem and \cite[Proposition 3.1]{FG10} that any quasiconformal mapping of $\C$ with constant complex dilatation given as above arises by conjugating $h_{K,\theta}$ by a complex linear map.

If we view quasiconformal mappings with constant complex dilatation as the simplest in their class, then we can view quadratic polynomials as the simplest non-injective holomorphic functions. In light of the Stoilow Decomposition Theorem, the simplest quasiregular mappings arise as compositions of quadratic polynomials and mappings of the form $h_{K,\theta}$. As defined in the introduction, for $K>1$, $\theta \in (-\pi/2,\pi/2]$ and $c\in \C$, we have $H_{K,\theta,c} = P_c \circ h_{K,\theta}$. For later use, we note that if $r>0$, then
\begin{equation}
\label{eq:hlin} 
h_{K,\theta}(rz) = rh_{K,\theta}(z) \text{ and } H_{K,\theta,0}(rz) = r^2H_{K,\theta,0}(z).
\end{equation}

It is worth remarking on the domains of $K$ and $\theta$. Evidently a stretch in the direction $e^{i\theta}$ is the same as a stretch in the direction $e^{i(\theta + \pi)}$, and so we only need consider $\theta \in (-\pi/2,\pi/2]$. A stretch by factor $K>1$ in the direction $e^{i\theta}$ is conjugate to a contraction by factor $1/K$ in the direction $e^{i(\theta + \pi/2)}$. We could thus restrict the domain of $\theta$ further and allow any positive value for $K$, but we will usually use the convention that $K>1$ instead. There may be occasion to conveniently allow $K>0$, but we will alert the reader when this is the case.
We recall from \cite[Proposition 3.1]{FG10} that every degree two quasiregular mapping of the plane with constant complex dilatation is linearly conjugate to $H_{K,\theta,c}$ for some choice of parameters.

\subsection{Dynamics of $H_{K,\theta,c}$}

As $H_{K,\theta,c}$ is quasiregular, we may consider the behavior of its iterates. In the plane, every uniformly quasiregular mapping, that is, one for which there is a uniform bound on the maximal dilatation of the iterates, is known to be a quasiconformal conjugate of a holomorphic map. This means that the dynamics of such mappings yields essentially nothing new when compared to the features of complex dynamics. Importantly, from the point of view of independent interest, the mappings $H_{K,\theta,c}$ are not uniformly quasiregular \cite[Theorem 1.12]{FF16}.

The escaping set for $H_{K,\theta,c}$ is a non-empty, open neighborhood of $\infty$ by \cite[Theorem 4.3]{FG10}. It follows that the bounded orbit set is the complement of the escaping set. All of $I(H_{K,\theta,c}), BO(H_{K,\theta,c})$ and $\partial I(H_{K,\theta,c})$ are completely invariant under $H_{K,\theta,c}$.

When $c=0$, we can guarantee a neighborhood of $0$ is in $BO(H_{K,\theta, 0})$.

\begin{lemma}
\label{lem:bo}
Let $K>1$ and $\theta \in (-\pi/2,\pi/2]$. Then the ball $\{ z : |z| < (2K^2)^{-1} \}$ is contained in $BO(H_{K,\theta,0})$.
\end{lemma}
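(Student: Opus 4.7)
The plan is essentially a straightforward contraction estimate exploiting the quadratic nature of $H_{K,\theta,0}$ near the origin.

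First I would observe, using the conjugation $h_{K,\theta}=\rho_\theta\circ h_{K,0}\circ \rho_{-\theta}$ by rotations (which are isometries), that $h_{K,\theta}$ acts on $\mathbb{C}$ by stretching by factor $K$ in one direction and by factor $1$ in the perpendicular direction. Hence for every $z\in\mathbb{C}$,
\[
|h_{K,\theta}(z)| \leq K|z|.
\]
Applying the quadratic $P_0(w)=w^2$ then yields the basic bound
\[
|H_{K,\theta,0}(z)| = |h_{K,\theta}(z)|^2 \leq K^2 |z|^2.
\]

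Next I would show the ball $B:=\{z : |z|<(2K^2)^{-1}\}$ is forward invariant and in fact contracted by a definite factor. If $|z|<(2K^2)^{-1}$, then the displayed estimate above gives
\[
|H_{K,\theta,0}(z)| \leq K^2|z|^2 = \bigl(K^2|z|\bigr)\cdot|z| < \tfrac{1}{2}|z|,
\]
so $H_{K,\theta,0}(z)\in B$ and, by induction on $n$,
\[
|H_{K,\theta,0}^n(z)| < 2^{-n}|z| < 2^{-n}(2K^2)^{-1}.
\]
In particular the forward orbit of $z$ is bounded (in fact converges to $0$), so $z\in BO(H_{K,\theta,0})$, completing the proof.

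There is no real obstacle here; the only subtle point is the identification of the operator norm of $h_{K,\theta}$, but since $h_{K,\theta}$ is an orthogonal conjugate of the diagonal stretch $h_{K,0}(x+iy)=Kx+iy$, the bound $|h_{K,\theta}(z)|\leq K|z|$ is immediate. Everything else is an elementary geometric series style estimate.
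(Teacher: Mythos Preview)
Your proof is correct and essentially identical to the paper's: both establish the bound $|H_{K,\theta,0}(z)|\leq K^2|z|^2$ (the paper via the explicit formula and triangle inequality, you via the operator norm of the rotation-conjugated stretch) and then observe that for $|z|<(2K^2)^{-1}$ this gives $|H_{K,\theta,0}(z)|<\tfrac12|z|$, from which bounded orbit follows immediately.
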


\begin{proof}
If $|z| < (2K^2)^{-1}$, then we have 
\begin{align*}
|H_{K,\theta,0}(z)| &=  \left | \left ( \frac{ K+1}{2} \right )z + e^{2i\theta} \left ( \frac{K-1}{2} \right ) \zbar \right |^2 \\
&\leq K^2|z|^2 \\
&< \frac{K^2|z|}{2K^2}\\
&=\frac{|z|}{2}.
\end{align*}
The conclusion follows.
\end{proof}

We also have control of the growth of $H_{K,\theta,c}$ near infinity.

\begin{lemma}
\label{lem:esc}
Let $K>1$, $\theta \in (-\pi/2,\pi/2]$ and $c\in \C$.
If $|z| \geq \max \{ |c| ,2 \}$, then 
\[ \frac{1}{2} \leq \frac{|H_{K,\theta,c}(z)|}{|z|^2} \leq (K^2+1).\]
\end{lemma}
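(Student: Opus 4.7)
The plan is to bound $|h_{K,\theta}(z)|$ above and below using the triangle inequality on the explicit formula $h_{K,\theta}(z) = \frac{K+1}{2}z + e^{2i\theta}\frac{K-1}{2}\zbar$, then square and apply the triangle inequality once more to handle the additive constant $c$.

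For the upper bound, I would estimate
\[ |h_{K,\theta}(z)| \leq \tfrac{K+1}{2}|z| + \tfrac{K-1}{2}|z| = K|z|, \]
so $|H_{K,\theta,c}(z)| \leq K^2|z|^2 + |c|$. Using the hypothesis $|z| \geq \max\{|c|,2\}$, which gives $|c| \leq |z| \leq |z|^2$, this yields $|H_{K,\theta,c}(z)| \leq (K^2+1)|z|^2$.

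For the lower bound, the reverse triangle inequality gives
\[ |h_{K,\theta}(z)| \geq \tfrac{K+1}{2}|z| - \tfrac{K-1}{2}|z| = |z|, \]
so $|h_{K,\theta}(z)|^2 \geq |z|^2$ and therefore $|H_{K,\theta,c}(z)| \geq |z|^2 - |c|$. Since $|z| \geq 2$ implies $|z|^2 \geq 2|z| \geq 2|c|$, I get $|H_{K,\theta,c}(z)| \geq |z|^2/2$, and dividing by $|z|^2$ gives the claimed lower bound.

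There is essentially no obstacle here: the only minor care needed is to verify that the hypothesis $|z| \geq \max\{|c|,2\}$ is actually used in the right place to simultaneously dominate $|c|$ by $|z|^2/2$ (for the lower bound) and by $|z|^2$ (for the upper bound). Both reductions follow from $|z| \geq 2$ together with $|c| \leq |z|$, so the statement comes out cleanly.
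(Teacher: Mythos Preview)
Your proof is correct and follows essentially the same approach as the paper: both bound $|h_{K,\theta}(z)|$ between $|z|$ and $K|z|$ via the triangle inequality, square, and then absorb $|c|$ using $|c|\le|z|$ together with $|z|\ge 2$. The only cosmetic difference is that the paper writes the lower bound as $|z|^2-|z|=|z|^2(1-|z|^{-1})\ge |z|^2/2$, whereas you go directly via $|c|\le |z|^2/2$.
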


\begin{proof}
If $|z| \geq \max \{|c| , 1 \}$, then
\begin{align*}
|H_{K,\theta,c} (z)| &= \left | h_{K,\theta}(z)^2 + c \right | \\
&\leq |h_{K,\theta}(z)|^2 + |c| \\
&\leq K^2|z|^2 + |z| \\
&\leq (K^2+1)|z|^2,
\end{align*}
which gives the upper bound. For the lower bound, if $|z| \geq \max \{ |c| , 2\}$, then
\begin{align*}
|H_{K,\theta,c}(z)| &\geq |h_{K,\theta}(z)|^2 - |c| \\
&\geq |z|^2 - |z|\\
&= |z|^2(1 - |z|^{-1})\\
&\geq  \frac{|z|^2}{2},
\end{align*}
as required.
\end{proof}

As this lemma also illustrates, the escaping set of $H_{K,\theta,c}$ is non-empty, and we can therefore ask for a conjugation to a simpler mapping in a neighborhood of infinity, analogous to B\"ottcher's Theorem. The following result yields this.

\begin{theorem}[Theorem 2.1, \cite{FF12}]
\label{thm:bottcher}
Let $K>1$, $\theta \in (-\pi/2,\pi/2]$ and $c\in \C$. Then there exist a neighborhood of infinity $W = W_{K,\theta,c}$ and a quasiconformal map $\varphi = \varphi_{K,\theta,c}$ defined in $W$ such that
\[ \varphi \circ H_{K,\theta,c} = H_{K,\theta,0} \circ \varphi \]
holds in $W$.
\end{theorem}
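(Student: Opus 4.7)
A direct attempt using the classical B\"ottcher coordinate $\psi_c$ of $P_c$ fails: setting $\varphi:=h_{K,\theta}^{-1}\circ\psi_c\circ h_{K,\theta}$ yields $H_{K,\theta,0}\circ\varphi=\psi_c\circ H_{K,\theta,c}$, which falls short of the desired equation unless $\psi_c$ commutes with $h_{K,\theta}$---generically false. Accordingly, the plan is to construct $\varphi$ as the limit of an iterative scheme on a neighborhood of infinity, mimicking the classical proof of B\"ottcher's theorem.

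First, using the factorization $H_{K,\theta,0}(z)=(h_{K,\theta}(z))^2$, I would define a local quasiconformal branch $B$ of $H_{K,\theta,0}^{-1}$ on a slit neighborhood of infinity by $B(w):=h_{K,\theta}^{-1}(\sqrt{w})$, where $\sqrt{\cdot}$ is the branch close to the identity at infinity; then $B\circ H_{K,\theta,0}=\mathrm{id}$ on an appropriate domain. By Lemma \ref{lem:esc}, fix $R>0$ large so that $W:=\{|z|>R\}\subset I(H_{K,\theta,c})$, $H_{K,\theta,c}(W)\subset W$, and $|H_{K,\theta,c}^n(z)|$ grows double-exponentially on $W$. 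Define
\[
\varphi_n(z):=B^n\bigl(H_{K,\theta,c}^n(z)\bigr),
\]
choosing branches of $B$ inductively so that $\varphi_n(z)\sim z$ as $|z|\to\infty$.

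Uniform convergence of $(\varphi_n)$ on compact subsets of $W$ follows from a telescoping estimate. Writing $w_n:=H_{K,\theta,c}^n(z)$,
\[
\varphi_{n+1}(z)-\varphi_n(z)=B^n\bigl(B(H_{K,\theta,c}(w_n))\bigr)-B^n(w_n).
\]
Since $H_{K,\theta,c}-H_{K,\theta,0}\equiv c$, expanding $B$ at $H_{K,\theta,0}(w)$ gives $B(H_{K,\theta,c}(w))-w=O(|w|^{-1})$ as $|w|\to\infty$. Combined with the contraction estimate $|(B^n)'(w)|\asymp|w|^{1/2^n-1}$ (since $B$ contracts like a square root) and $|w_n|$ growing like $|z|^{2^n}$, each difference $|\varphi_{n+1}-\varphi_n|$ is super-exponentially small, so $\varphi_n\to\varphi$ uniformly on compacts in $W$.

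The main obstacle is showing $\varphi$ is quasiconformal. Each $\varphi_n$ is a composition of $2n$ quasiconformal factors---$n$ copies of $h_{K,\theta}$ from $H_{K,\theta,c}^n$ and $n$ copies of $h_{K,\theta}^{-1}$ from $B^n$, interleaved with holomorphic (polynomial and square-root) factors. A na\"ive bound gives $K(\varphi_n)\leq K^{2n}$, which is useless. The uniform bound $K(\varphi_n)\leq K^\star$ must come from the opposite constant Beltrami coefficients $\pm\mu_0$ of $h_{K,\theta}$ and $h_{K,\theta}^{-1}$: a careful chain-rule computation through the alternating composition shows that the Beltrami coefficient of $\varphi_n$ is at each point a bounded rational function of $\mu_0$ and phases of intermediate holomorphic derivatives, never exceeding a constant depending only on $|\mu_0|$. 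With this uniform dilatation bound, $\varphi$ is $K^\star$-quasiconformal by the standard compactness theorem for QC maps, and passing to the limit in the identity $\varphi_{n+1}(z)=B(\varphi_n(H_{K,\theta,c}(z)))$ yields the functional equation $\varphi\circ H_{K,\theta,c}=H_{K,\theta,0}\circ\varphi$ on $W$.
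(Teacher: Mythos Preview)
The paper does not prove this statement; it is quoted verbatim as Theorem~2.1 of \cite{FF12} and used as a black box. So there is no ``paper's proof'' to compare against here, only the original reference.

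Your iterative scheme $\varphi_n=B^n\circ H_{K,\theta,c}^n$ is the natural B\"ottcher-type construction, and the telescoping convergence argument is essentially sound (modulo the branch bookkeeping for $B$, which you acknowledge). The genuine gap is exactly where you flag it: the uniform dilatation bound. Your assertion that ``a careful chain-rule computation \dots\ shows the Beltrami coefficient never exceeds a constant depending only on $|\mu_0|$'' is not justified, and the naive estimate actually fails. For a single sandwich $h_{K,\theta}^{-1}\circ g\circ h_{K,\theta}$ with $g$ holomorphic one gets
\[
|\mu|\ \le\ \frac{2|\mu_0|}{1+|\mu_0|^2},
\]
and iterating this bound gives a sequence $k_{n+1}=2k_n/(1+k_n^2)$ that increases monotonically to $1$. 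So the alternation of $h$ and $h^{-1}$ by itself does \emph{not} force a uniform bound; any honest proof must exploit more than the constant moduli of the interleaved Beltrami coefficients---for instance, that $\varphi_n$ is asymptotic to the identity and the perturbation from $c$ is lower order, or (as is common in this setting) a fundamental-domain construction together with a pullback of an invariant Beltrami coefficient via the Measurable Riemann Mapping Theorem. Until that step is filled in, the argument does not establish that the limit $\varphi$ is quasiconformal rather than merely a continuous semiconjugacy.

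A smaller point: your contraction estimate $|(B^n)'(w)|\asymp |w|^{1/2^n-1}$ treats $B$ as if it were holomorphic, but $B=h_{K,\theta}^{-1}\circ\sqrt{\cdot}$ is only quasiconformal, so this should be phrased as a bi-Lipschitz-type bound with constants depending on $K$. This is repairable, but worth stating correctly.
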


\subsection{Riemann-Hurwitz formula}

Finally, we will need the following version of the Riemann-Hurwitz formula. Due to the Stoilow Decomposition Theorem, there are no technical issues when applying the Riemann-Hurwitz formula with quasiregular mappings instead of holomorphic functions, but we refer to \cite[Corollary 5.2]{FG10} for more details.

\begin{theorem}
\label{thm:rh}
Let $D_1$ and $D_2$ be domains in $\C_{\infty}$ whose boundaries consist of a finite number of simple closed curves. Let $f$ be a proper quasiregular map of degree $d$ from $D_{1}$ onto $D_{2}$ with $L$ branch points including multiplicity. Then every $z\in D_{2}$ has the same number $d$ of pre-images including multiplicity and 
\begin{equation*}
2-d_{1}=d(2-d_{2})-L,
\end{equation*}
where $d_{j}$ is the number of boundary components of $D_{j}$.
\end{theorem}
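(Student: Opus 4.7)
The plan is to reduce the statement to the classical holomorphic Riemann--Hurwitz formula using the Stoilow Decomposition Theorem (Theorem \ref{thm:stoilow}). First, I would take the Beltrami coefficient $\mu_f$ on $D_1$, extend it by zero outside $D_1$, and use the Measurable Riemann Mapping Theorem to obtain a global quasiconformal homeomorphism $h:\C_{\infty}\to\C_{\infty}$ with $\mu_h = \mu_f$ almost everywhere on $D_1$. Then $g:= f\circ h^{-1}$ has vanishing Beltrami coefficient on $D_1':= h(D_1)$, so $g$ is holomorphic there. Because $h$ is an orientation-preserving homeomorphism of the sphere, $D_1'$ is again a domain whose boundary is the union of $d_1$ disjoint simple closed curves.

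Next I would verify that each quantity appearing in the formula transfers from $f$ to $g$. Since $h$ is a bijection, $\deg(g)=\deg(f)=d$, and since $f$ is proper and $h$ a homeomorphism, $g:D_1'\to D_2$ is proper as well. Branch points, being defined topologically by the local index exceeding $1$, are preserved by composition with homeomorphisms, so the branch set of $g$ is $h(B_f)$ with the same local indices; hence $g$ still has $L$ branch points counted with multiplicity.

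Now I can apply the classical Riemann--Hurwitz formula for proper holomorphic maps between finite-type planar domains: for such $g$,
\[ \chi(D_1') \;=\; d\,\chi(D_2) \;-\; L. \]
This is the standard Euler-characteristic count: away from the branch set, $g$ is a $d$-fold covering, and at a branch point of local index $e$ the fibre cardinality drops by $e-1$. Since a domain in $\C_{\infty}$ whose boundary consists of $m$ disjoint simple closed curves is homeomorphic to the sphere minus $m$ open disks, its Euler characteristic is $2-m$. Substituting $\chi(D_1')=2-d_1$ and $\chi(D_2)=2-d_2$ gives exactly $2-d_1=d(2-d_2)-L$. The statement that every $z\in D_2$ has $d$ preimages counted with multiplicity follows from the fact that a proper holomorphic map between planar domains is a branched covering of constant degree, and this property pulls back to $f$ through $h^{-1}$.

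The main obstacle I would expect is the bookkeeping around the Stoilow factorization: verifying that the chosen $h$ restricts to a homeomorphism from $D_1$ onto a nice domain $D_1'$ (so that holomorphic Riemann--Hurwitz applies), that $g$ inherits properness, and that the local-index data transfers correctly so that the branching total $L$ is unchanged. Each of these is standard once $h$ is produced globally on $\C_{\infty}$ by the Measurable Riemann Mapping Theorem, but it is the step where the quasiregular-to-holomorphic reduction must be carried out with care; after that the computation is purely topological.
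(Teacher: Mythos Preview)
Your approach is correct and is exactly the reduction the paper has in mind: the paper does not actually prove Theorem~\ref{thm:rh} but simply notes that ``due to the Stoilow Decomposition Theorem, there are no technical issues when applying the Riemann--Hurwitz formula with quasiregular mappings instead of holomorphic functions'' and refers to \cite[Corollary 5.2]{FG10} for details. Your write-up spells out precisely this reduction via the Measurable Riemann Mapping Theorem and then invokes the classical holomorphic Riemann--Hurwitz formula, which is all that is needed.
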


\section{Constructing the Green's function}
\label{sec:greens}

In this section, we will construct our analogue of Green's function and prove Theorem \ref{thm:green}. First, let us denote by $R_{\phi}$ the ray $\{ te^{i\phi} : t \geq  0 \}$. The bounded orbit set $BO(H_{K,\theta,0})$ is starlike about $z=0$, as the next lemma shows. We remark that this result has appeared as \cite[Corollary 1.10]{FF16}, although there is some opacity to that proof which the following proof makes transparent.

\begin{lemma}
\label{lem:star}
Let $K>1$ and $\theta \in (-\pi/2,\pi/2]$. For any $\phi \in [0,2\pi)$,
the set $\partial I(H_{K,\theta,0})\cap R_{\phi}$ contains exactly one element.
\end{lemma}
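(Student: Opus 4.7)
The key tool is the scaling identity \eqref{eq:hlin}, which iterates to $H_{K,\theta,0}^n(rz)=r^{2^n}H_{K,\theta,0}^n(z)$ for every $r>0$ and $n\geq 1$. My plan is to use this rigidity to analyze the trace of $BO(H_{K,\theta,0})$ and $I(H_{K,\theta,0})$ along the ray $R_\phi$ and identify a unique transition point.

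First I would fix $\phi\in[0,2\pi)$ and set
\[ t_*(\phi)=\inf\{t>0:te^{i\phi}\in I(H_{K,\theta,0})\}. \]
Lemma \ref{lem:bo} guarantees $t_*(\phi)\geq(2K^2)^{-1}>0$, while Lemma \ref{lem:esc} (with $c=0$, which forces $|H_{K,\theta,0}^n(z)|\to\infty$ whenever $|z|\geq 2$) yields $t_*(\phi)\leq 2$. Next, I would use the scaling to establish monotonicity along the ray: if $te^{i\phi}\in I$ and $s>t$, then $|H_{K,\theta,0}^n(se^{i\phi})|=(s/t)^{2^n}\,|H_{K,\theta,0}^n(te^{i\phi})|\to\infty$, so $\{t>0:te^{i\phi}\in I\}$ is upward-closed. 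Combining with the openness of $I$, which prevents $t_*(\phi)e^{i\phi}$ itself from lying in $I$, I obtain $te^{i\phi}\in I$ for all $t>t_*(\phi)$ and $te^{i\phi}\in BO$ for all $0\leq t\leq t_*(\phi)$.

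Having singled out $t_*(\phi)e^{i\phi}$ as the only candidate boundary point on the ray, I would then verify that $\partial I\cap R_\phi=\{t_*(\phi)e^{i\phi}\}$. Membership of $t_*(\phi)e^{i\phi}$ in $\partial I$ is immediate, since it lies in $BO$ and is the limit of the $I$-points $(t_*(\phi)+\delta)e^{i\phi}$. Points with $t>t_*(\phi)$ are in the open set $I$ and hence not in $\partial I$. For $0\leq t<t_*(\phi)$ the scaling gives
\[ |H_{K,\theta,0}^n(te^{i\phi})|=(t/t_*(\phi))^{2^n}\,|H_{K,\theta,0}^n(t_*(\phi)e^{i\phi})|\leq (t/t_*(\phi))^{2^n}\,M\to 0, \]
where $M$ bounds the orbit of $t_*(\phi)e^{i\phi}\in BO$. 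So some iterate $H_{K,\theta,0}^N(te^{i\phi})$ falls inside the ball from Lemma \ref{lem:bo}, and by continuity of $H_{K,\theta,0}^N$ a full two-dimensional neighbourhood $U$ of $te^{i\phi}$ maps into $B(0,(2K^2)^{-1})\subset BO$; the complete invariance of $BO$ forces $U\subset BO$, so $te^{i\phi}$ is interior to $BO$ and not in $\partial I$.

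The main obstacle will be the final step, where the one-dimensional information ``$te^{i\phi}\in BO$ for $t<t_*(\phi)$'' must be upgraded to the genuinely two-dimensional statement that $te^{i\phi}$ is interior to $BO$. This is precisely where the scaling pays off: it forces the orbit of any sub-threshold point on the ray to converge to $0$, placing the point in the open basin of attraction of the attracting fixed point at the origin, whence a neighbourhood in $BO$ is automatic.
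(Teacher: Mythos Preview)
Your proof is correct and rests on the same two ingredients as the paper's---the scaling identity \eqref{eq:hlin} and Lemma \ref{lem:bo}---but the logic is organised dually. The paper argues by contradiction: given two hypothetical boundary points $z_1=tz_2$ on $R_\phi$ with $t>1$, it invokes the forward invariance of $\partial I(H_{K,\theta,0})$ together with the \emph{lower} bound $|H^n_{K,\theta,0}(z_2)|\geq(2K^2)^{-1}$ from Lemma \ref{lem:bo} to force $|H^n_{K,\theta,0}(z_1)|=t^{2^n}|H^n_{K,\theta,0}(z_2)|\to\infty$, contradicting $z_1\in\partial I$. You instead construct the transition point $t_*(\phi)$ explicitly and use the \emph{upper} bound on the orbit of $t_*(\phi)e^{i\phi}\in BO$ to drive every sub-threshold orbit to $0$. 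Your route is a little longer but delivers the extra conclusion that each $te^{i\phi}$ with $t<t_*(\phi)$ lies in the interior of $BO$ (indeed in the basin of attraction of $0$), a fact the paper's contradiction argument does not make explicit.
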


\begin{proof}
	  The set $I(H_{K,\theta,0})$ contains a neighborhood of infinity, so $I(H_{K,\theta,0})\cap R_{\phi}$ is non-empty. By Lemma $\ref{lem:bo}$, there exists $r>0$ such that $\{z:|z|<r\}$ is in the interior of $BO(H_{K,\theta,0})$. Thus, $BO(H_{K,\theta,0})\cap R_{\phi}$ is non-empty. As such, $\partial H_{K,\theta,0}\cap R_{\phi}$ contains at least one element. Moreover, $\partial I(H_{K,\theta,0})$ contains no elements that are less than $r$ in absolute value.\\
	 
	 Suppose towards a contradiction that $\partial I(H_{K,\theta,0})\cap R_{\phi}$ contains at least two elements. Then there are elements $z_1$ and $z_2$ in $\partial I(H_{K,\theta,0})\cap R_{\phi}$ such that $z_1=tz_2$, where $t>1$. By \eqref{eq:hlin}, Lemma \ref{lem:bo}, and the complete invariance of $\partial I(H_{K,\theta,c})$, for $n\in \N$ we have
	  \begin{align*}|H^{n}_{K,\theta,0}(z_1)|&=|H^n_{K,\theta,0}(tz_2)|\\
	 	& =|t^{2^n}H^{n}_{K,\theta,0}(z_2)|\\
	 	&=t^{2^n}|H^{n}_{K,\theta,0}(z_2)|\\
	 	&\geq \frac{ t^{2^n}}{2K^2}.
	\end{align*}
Therefore, $z_{1}\in I(H_{K,\theta,0})$, which is a contradiction.
\end{proof}

In light of Lemma \ref{lem:star}, we make the following definition.

\begin{definition}
\label{def:bkt}
Let $K>1$ and $\theta \in (-\pi/2 , \pi/2]$. Denote by $b_{K,\theta}(\phi)\in \C$ the unique element of $\partial I(H_{K,\theta,0})\cap R_{\phi}$. 
\end{definition}

Our aim is to use $b_{K,\theta}$ to model the dynamics of $H_{K,\theta,0}$ on a squaring map.

\begin{definition}
\label{def:tau0}
Let $K>1$ and $\theta \in (-\pi/2,\pi/2]$.
Define $\tau_{K,\theta,0}:\C \to \R^+$ for $z\neq 0$ by $\tau_{K,\theta,0}(z)=\frac{z}{b_{K,\theta}(\arg(z))}$, and set $\tau_{K,\theta,0}(0)=0$.
\end{definition}

Observe that $\arg ( b_{K,\theta}(\arg(z)) ) = \arg (z)$.
The key point here is that $\tau_{K,\theta,0}^{-1} (1) = \partial I(H_{K,\theta , 0})$.

\begin{lemma}
\label{lem:tau scale}
For any $r>0$, $\tau_{K,\theta,0}(rz)=r\tau_{K,\theta,0}(z)$. 
\end{lemma}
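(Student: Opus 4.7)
The plan is to observe that this is essentially a direct consequence of the definition of $\tau_{K,\theta,0}$ and the fact that scaling by a positive real number preserves the argument.

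First I would dispose of the case $z=0$: by definition $\tau_{K,\theta,0}(0)=0$, and $rz=0$, so both sides of the claimed identity are zero. For the generic case $z \neq 0$ and $r>0$, the key observation is simply that $\arg(rz)=\arg(z)$, since multiplication by a positive scalar preserves the ray from the origin on which $z$ lies. Therefore
\[ b_{K,\theta}(\arg(rz)) = b_{K,\theta}(\arg(z)), \]
since the function $b_{K,\theta}$ depends only on the argument.

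Plugging this into Definition \ref{def:tau0} gives
\[ \tau_{K,\theta,0}(rz) = \frac{rz}{b_{K,\theta}(\arg(rz))} = \frac{rz}{b_{K,\theta}(\arg(z))} = r \cdot \frac{z}{b_{K,\theta}(\arg(z))} = r\tau_{K,\theta,0}(z), \]
and the identity is established.

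There is no serious obstacle here; the lemma is essentially a compatibility statement between the argument-only dependence of $b_{K,\theta}$ and positive scalar multiplication. The only thing worth double-checking is that the quotient $z/b_{K,\theta}(\arg(z))$ is indeed a non-negative real number (so that the stated codomain $\R^+$ is consistent), which follows because $z$ and $b_{K,\theta}(\arg(z))$ lie on the same ray $R_{\arg(z)}$ emanating from the origin.
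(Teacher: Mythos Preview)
Your proof is correct and follows essentially the same approach as the paper's: both simply unwind the definition of $\tau_{K,\theta,0}$ and use that $\arg(rz)=\arg(z)$ for $r>0$. You are in fact slightly more explicit than the paper, which silently replaces $b_{K,\theta}(\arg(rz))$ by $b_{K,\theta}(\arg(z))$ and (somewhat oddly) disposes of the case $r=0$ rather than $z=0$.
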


\begin{proof}
The claim is clear if $r=0$.
Otherwise, 
\begin{align*}
\tau_{K,\theta,0}(rz) &= \frac{rz}{b_{K,\theta}(\arg(z))}\\
&=r\tau_{K,\theta,0}(z),
\end{align*}
as required.
\end{proof}

The following lemma is the whole point of introducing $\tau_{K,\theta ,0}$: it semi-conjugates between $H_{K,\theta,0}$ in $\C$ and the squaring map on $\R^+$.

\begin{lemma}
\label{lem:tau squared}
Let $K>1$ and $\theta \in (-\pi/2,\pi/2]$.
For any $z\in \mathbb{C}$, 
\begin{equation*}
\tau_{K,\theta,0}(H_{K,\theta,0}(z))=[\tau_{K,\theta,0}(z)]^2.
\end{equation*}
\end{lemma}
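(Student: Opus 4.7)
The plan is to reduce the identity to the scaling behavior of $H_{K,\theta,0}$ from \eqref{eq:hlin}, the scaling of $\tau_{K,\theta,0}$ from Lemma \ref{lem:tau scale}, and the complete invariance of $\partial I(H_{K,\theta,0})$ noted earlier. The case $z=0$ is immediate, since $H_{K,\theta,0}(0)=0$ and $\tau_{K,\theta,0}(0)=0$, so both sides vanish.

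For $z\neq 0$, I would first observe the tautological factorization $z = \tau_{K,\theta,0}(z)\,b_{K,\theta}(\arg z)$: indeed, $b_{K,\theta}(\arg z)$ lies on $R_{\arg z}$, so it has the same argument as $z$, and by definition of $\tau_{K,\theta,0}$ the positive real ratio is exactly $\tau_{K,\theta,0}(z)$. Writing $r=\tau_{K,\theta,0}(z)>0$, applying $H_{K,\theta,0}$ and using the second identity in \eqref{eq:hlin} gives
\[
H_{K,\theta,0}(z) = H_{K,\theta,0}\bigl(r\,b_{K,\theta}(\arg z)\bigr) = r^{2}\,H_{K,\theta,0}\bigl(b_{K,\theta}(\arg z)\bigr).
\]
Then Lemma \ref{lem:tau scale} pulls the $r^{2}$ outside:
\[
\tau_{K,\theta,0}\bigl(H_{K,\theta,0}(z)\bigr) = r^{2}\,\tau_{K,\theta,0}\bigl(H_{K,\theta,0}(b_{K,\theta}(\arg z))\bigr).
\]

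The step that does the real work is to verify that the remaining factor equals $1$. Since $b_{K,\theta}(\arg z)\in \partial I(H_{K,\theta,0})$ by Definition \ref{def:bkt}, complete invariance of $\partial I(H_{K,\theta,0})$ implies $w:=H_{K,\theta,0}(b_{K,\theta}(\arg z))\in \partial I(H_{K,\theta,0})$. I need $w\neq 0$ in order for $\arg w$ to make sense; this follows because $0$ lies in the interior of $BO(H_{K,\theta,0})$ by Lemma \ref{lem:bo}, so $0\notin \partial I(H_{K,\theta,0})$. Then $w$ is a point of $\partial I(H_{K,\theta,0})\cap R_{\arg w}$, and Lemma \ref{lem:star} forces $w=b_{K,\theta}(\arg w)$, so $\tau_{K,\theta,0}(w)=1$. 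Substituting back yields $\tau_{K,\theta,0}(H_{K,\theta,0}(z))=r^{2}=[\tau_{K,\theta,0}(z)]^{2}$, as required.

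There is no real obstacle here; the only subtlety is the bookkeeping that ensures $w\neq 0$ so that Lemma \ref{lem:star} can be invoked, and this is handled by Lemma \ref{lem:bo}.
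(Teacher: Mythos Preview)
Your proof is correct and follows essentially the same route as the paper: reduce to the scaling identities \eqref{eq:hlin} and Lemma \ref{lem:tau scale}, then use complete invariance of $\partial I(H_{K,\theta,0})$ to see that $\tau_{K,\theta,0}$ takes the value $1$ on the image of $b_{K,\theta}(\arg z)$. Your write-up is in fact slightly tidier, since you name the scaling factor $r=\tau_{K,\theta,0}(z)$ from the outset rather than carrying around the ratio $r/|b_{K,\theta}(\phi)|$, and you make explicit the small point that $w\neq 0$ (via Lemma \ref{lem:bo}) so that $\arg w$ is defined.
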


\begin{proof}
If $z=0$, then the lemma is clear. Otherwise, suppose that $z=re^{i\phi}$. Then by \eqref{eq:hlin} and Lemma \ref{lem:tau scale}, we have
\begin{align*}
\tau_{K,\theta,0}(H_{K,\theta,0}(re^{i\phi}))&=\tau_{K,\theta,0}\left ( H_{K,\theta,0}\left (\frac{r|b_{K,\theta}(\phi)|e^{i\phi}}{|b_{K,\theta}(\phi)|}\right )\right )\\
&=\frac{r^2}{|b_{K,\theta}(\phi)|^2}\tau_{K,\theta,0}(H_{K,\theta,0}(|b_{K,\theta}(\phi)|e^{i\phi}))\\
&=\frac{r^2}{|b_{K,\theta}(\phi)|^2}\tau_{K,\theta,0}(H_{K,\theta,0}(b_{K,\theta}(\phi)))
\end{align*}
As $b_{K,\theta}(\phi) \in \partial I(H_{K,\theta,0})$ and as $\partial I(H_{K,\theta,0})$ is completely invariant, we see that $\tau_{K,\theta,0}(H_{K,\theta,0}(b_{K,\theta}(\phi)))  = \tau_{K,\theta,0}(b_{K,\theta}(\phi) )= 1$.
Therefore,
\begin{align*}
\tau_{K,\theta,0}(H_{K,\theta,0}(re^{i\phi}))&=\frac{r^2}{|b_{K,\theta}(\phi)|^2}\\
&=\left(\frac{r}{|b_{K,\theta}(\phi)|}\tau_{K,\theta,0}(b_{K,\theta}(\phi))\right)^2\\
&=\left[\tau_{K,\theta,0}\left(\frac{r}{|b_{K,\theta}(\phi)|}b_{K,\theta}(\phi)\right)\right]^2\\
&=[\tau_{K,\theta,0}(re^{i\theta})]^2.
\end{align*}
\end{proof}

We next extend the definition of $\tau_{K,\theta,0}$ to $\tau_{K,\theta,c}$ for general $c\in \C$ by using the B\"ottcher coordinate, with the caveat that $\tau_{K,\theta,c}$ is not (immediately) globally defined. Recall the quasiconformal map $\varphi_{K,\theta,c}$ defined in the neighborhood $W_{K,\theta,c}$ of $\infty$ from Theroem \ref{thm:bottcher}.

\begin{definition}
\label{def:tauc}
Let $K>1$, $\theta \in (-\pi/2,\pi/2]$ and $c\in \C$. For $z\in W_{K,\theta,c}$, we define $\tau_{K,\theta,c} : W_{K,\theta,c} \to \R^+$ by
\[ \tau_{K,\theta,c}(z) = \tau_{K,\theta,0} ( \varphi_{K,\theta,c} (z) ).\]
\end{definition}

Importantly, this generalized version of $\tau_{K,\theta,0}$ still semi-conjugates between $H_{K,\theta,c}$ in a neighborhood of $\infty$ and the squaring map.

\begin{lemma}
\label{lem:tau general squared}
Let $K>1$, $\theta \in (-\pi/2,\pi/2]$ and $c\in \C$.
For any $z\in W_{K,\theta,c}$, 
\begin{equation*}
\tau_{K,\theta,c}(H_{K,\theta,c}(z))=[\tau_{K,\theta,c}(z)]^2.
\end{equation*}
\end{lemma}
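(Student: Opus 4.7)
The plan is a direct unwinding of definitions, with Theorem \ref{thm:bottcher} and Lemma \ref{lem:tau squared} doing all the work. Nothing new needs to be established; this lemma simply records that the semi-conjugacy of $H_{K,\theta,0}$ to the squaring map on $\R^+$ (obtained in Lemma \ref{lem:tau squared}) transports through the B\"ottcher coordinate $\varphi_{K,\theta,c}$ to give an analogous semi-conjugacy for $H_{K,\theta,c}$ on the neighborhood $W_{K,\theta,c}$ of infinity.

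Concretely, I would fix $z \in W_{K,\theta,c}$ and write down the chain
\[
\tau_{K,\theta,c}(H_{K,\theta,c}(z))
= \tau_{K,\theta,0}\bigl(\varphi_{K,\theta,c}(H_{K,\theta,c}(z))\bigr)
= \tau_{K,\theta,0}\bigl(H_{K,\theta,0}(\varphi_{K,\theta,c}(z))\bigr)
= \bigl[\tau_{K,\theta,0}(\varphi_{K,\theta,c}(z))\bigr]^2
= [\tau_{K,\theta,c}(z)]^2,
\]
where the first and last equalities are Definition \ref{def:tauc}, the second is the conjugation relation from Theorem \ref{thm:bottcher}, and the third is Lemma \ref{lem:tau squared} applied to the point $\varphi_{K,\theta,c}(z) \in \C$.

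The only thing one must verify is that the composition $\tau_{K,\theta,c} \circ H_{K,\theta,c}$ makes sense at $z$, that is, that $H_{K,\theta,c}(z)$ lies in $W_{K,\theta,c}$ so that Definition \ref{def:tauc} applies on the left-hand side. This is the expected mild obstacle: the statement of Theorem \ref{thm:bottcher} gives the conjugacy on $W_{K,\theta,c}$, and for that identity to be meaningful one needs $H_{K,\theta,c}(W_{K,\theta,c}) \subset W_{K,\theta,c}$. Since $W_{K,\theta,c}$ is a neighborhood of $\infty$ in the escaping set on which $\varphi_{K,\theta,c}$ is defined, we may assume (shrinking $W_{K,\theta,c}$ if necessary, for instance by intersecting with a forward-invariant set $\{|z| > R\}$ coming from Lemma \ref{lem:esc}) that this forward invariance holds; this is already implicit in Theorem \ref{thm:bottcher}. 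With that point noted, the four-step chain above completes the proof.
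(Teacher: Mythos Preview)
Your proof is correct and follows essentially the same approach as the paper: both unwind Definition \ref{def:tauc}, apply the conjugacy from Theorem \ref{thm:bottcher}, and then invoke Lemma \ref{lem:tau squared}, yielding the identical four-step chain. Your additional remark about the forward invariance $H_{K,\theta,c}(W_{K,\theta,c}) \subset W_{K,\theta,c}$ is a valid point of rigor that the paper leaves implicit.
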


\begin{proof}
By Lemma \ref{lem:tau squared} and Theorem \ref{thm:bottcher}, for $z\in W_{K,\theta,c}$, we have
\begin{align*}
\tau_{K,\theta,c}(H_{K,\theta,c}(z))&=\tau_{K,\theta,0}\circ (\varphi_{K,\theta,c}\circ H_{K,\theta,c})(z)\\
&=\tau_{K,\theta,0}\circ (H_{K,\theta,0} \circ \varphi_{K,\theta,c})(z)\\
&=(\tau_{K,\theta,0}\circ H_{K,\theta,0}) \circ \varphi_{K,\theta,c}(z)\\
&=[\tau_{K,\theta, 0}(\varphi_{K,\theta,c}(z))]^2\\
&=[\tau_{K,\theta,c}(z)]^2.
\end{align*}
\end{proof}

Recall from \eqref{eq:greenpc} that the Green's function $G_c$ semi-conjugates between $P_c$ and multiplication by $2$. To construct our analogue of the Green's function, we may now just modify $\tau_{K,\theta,c}$ as follows.

\begin{definition}
\label{def:gktc}
Let $K>1$, $\theta \in (-\pi/2,\pi/2]$ and $c\in \C$. We define $G_{K,\theta,c} : W_{K,\theta,c} \to \R^+$ by
\[ G_{K,\theta,c}(z) = \log (\tau_{K,\theta,c}(z)).\]
\end{definition}

It is then clear from Lemma \ref{lem:tau general squared} that for $z\in W_{K,\theta,c}$, we have
\begin{equation}
\label{eq:gktcsemiconj}
G_{K,\theta,c}(H_{K,\theta,c}(z)) = 2G_{K,\theta,c}(z).
\end{equation}

We are now a long way towards proving Theorem \ref{thm:green}. The remaining task is to show that we can extend the domain of definition of $G_{K,\theta,c}$ to all of $\C$. Recall that the backward orbit of a set $X$ under a map $f$ is 
\[ O_f^-(X) = \bigcup_{n=0}^{\infty} f^{-n}(X).\]
If the context is clear, we will just write $O^-(X)$.

\begin{lemma}
\label{lem:Big Union}
Let $K>1$, $\theta \in (-\pi/2,\pi/2]$ and $c\in \C$.
If $U$ is a neighborhood of infinity contained in $I(H_{K,\theta,c})$, then 
\[ I(H_{K,\theta,c})=O^-(U).\]
\end{lemma}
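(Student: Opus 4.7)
The proof should be a short double inclusion argument that uses only (i) the complete invariance of $I(H_{K,\theta,c})$ (recorded earlier in the paper) and (ii) the fact that ``neighborhood of infinity'' means $U$ contains the complement of some large disk.

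For the inclusion $O^-(U)\subseteq I(H_{K,\theta,c})$, the plan is: if $z\in O^-(U)$, then $H_{K,\theta,c}^n(z)\in U$ for some $n\ge 0$. Since $U\subseteq I(H_{K,\theta,c})$, the forward orbit of $H_{K,\theta,c}^n(z)$ escapes to $\infty$, and hence so does the forward orbit of $z$ itself. Thus $z\in I(H_{K,\theta,c})$. (Equivalently, one can invoke the complete invariance of $I(H_{K,\theta,c})$ directly.)

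For the reverse inclusion $I(H_{K,\theta,c})\subseteq O^-(U)$, fix $R>0$ large enough that $\{w\in\C:|w|>R\}\subseteq U$; such an $R$ exists because $U$ is a neighborhood of $\infty$. If $z\in I(H_{K,\theta,c})$, then by definition $|H_{K,\theta,c}^n(z)|\to\infty$, so there exists $N\in\N$ with $|H_{K,\theta,c}^N(z)|>R$, i.e.\ $H_{K,\theta,c}^N(z)\in U$. This says exactly that $z\in H_{K,\theta,c}^{-N}(U)\subseteq O^-(U)$.

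There is no real obstacle here; the statement is essentially a reformulation of the definition of the escaping set combined with the hypothesis that $U$ engulfs a neighborhood of $\infty$. The only thing worth being careful about is the convention in the definition of $O^-$: since the union starts at $n=0$, the set $U$ itself lies in $O^-(U)$, which is consistent with both inclusions above.
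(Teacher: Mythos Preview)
Your proposal is correct and follows essentially the same double-inclusion argument as the paper: complete invariance of $I(H_{K,\theta,c})$ gives $O^-(U)\subseteq I(H_{K,\theta,c})$, and the definition of escaping combined with $U$ being a neighborhood of $\infty$ gives the reverse inclusion. Your version is slightly more explicit in naming a radius $R$ with $\{|w|>R\}\subseteq U$, but the content is the same.
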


\begin{proof}
By the complete invariance of the escaping set, it is clear that $O_{H_{K,\theta,c}}^-(U) \cap BO(H_{K,\theta,c}) = \emptyset$ and so 
\[ O^-(U) \subset I(H_{K,\theta,c}).\]
On the other hand, if $z\in I(H_{K,\theta,c})$, then for a large enough iterate, $f^n (z) \in U$ and so
\[ I(H_{K,\theta,c}) \subset O^-(U),\]
which completes the proof.
\end{proof}

We can now complete the proof of Theorem \ref{thm:green}

\begin{proof}[Proof of Theorem \ref{thm:green}]
From Lemma \ref{lem:tau general squared} and \eqref{eq:gktcsemiconj}, we have the existence of $G_{K,\theta,c}$ in $W_{K,\theta,c}$ and the required functional equation is satisfied there. The idea is to pullback the domain of definition of $G_{K,\theta,c}$ via the functional equation. That is, if $z\in H_{K,\theta,c}^{-n}(W_{K,\theta,c})$, we define
\begin{equation} 
\label{eq:gextend}
G_{K,\theta,c}(z) = \frac{ G_{K,\theta,c} ( H^n_{K,\theta,c}(z) )}{2^n}.
\end{equation}
Let us show that this is well-defined, that is, it is independent of the specific choice of $n$. Suppose that $z\in H^{-n_{1}}_{K,\theta,c}(W_{K,\theta,c})\cap H^{-n_{2}}_{K,\theta,c}(W_{K,\theta,c})$, where $n_{1}<n_{2}$. By \eqref{eq:gktcsemiconj}, we have
\begin{align*}
G_{K,\theta,c}(z)&=\frac{G_{K,\theta,c}(H^{n_{2}}_{K,\theta,c}(z))}{2^{n_2}}\\
&=\frac{G_{K,\theta,c}(H^{n_2-n_1}_{K,\theta,c}(H^{n_{1}}_{K,\theta,c}(z)))}{2^{n_2}}\\
&=\frac{2^{n_2-n_1}G_{K,\theta,c}(H^{n_{1}}_{K,\theta,c}(z))}{2^{n_2}}\\
&=\frac{G_{K,\theta,c}(H^{n_{1}}_{K,\theta,c}(z))}{2^{n_1}}.
\end{align*}
Further, this extension of $G_{K,\theta,c}$ to $H_{K,\theta,c}^{-n}(W_{K,\theta,c})$ is continuous as all the maps in the composition given in \eqref{eq:gextend} are continuous. By Lemma \ref{lem:Big Union}, we may extend $G_{K,\theta,c}$ continuously to all of $I(H_{K,\theta,c})$ and, moreover, \eqref{eq:gextend} implies that we have \eqref{eq:gktcsemiconj} on all of $I(H_{K,\theta,c})$.

Next, we show that $G_{K,\theta,c}(z) \to 0$ as $z \to \partial I(H_{K,\theta,c})$. We may assume that the open neighborhood of infinity $W_{K,\theta,c}$ is a strictly positive distance from $\partial I(H_{K,\theta,c})$. Fix $N\in \N$ and let 
\[ U = \overline{ H^{-(N+1)}_{K,\theta,c}(W_{K,\theta,c})} \setminus H^{-N}_{K,\theta,c}(W_{K,\theta,c})  .\]
As $U$ is a compact set and $G_{K,\theta,c}$ is a continuous function, we have
\[ \sup_{z\in U} G_{K,\theta,c}(z)  = T <\infty .\]
It then follows by construction that every component of $H_{K,\theta,c}^{-1}(U)$ is contained in a bounded component of the complement of $U$. Moreover, by \eqref{eq:gextend}, for $z\in H_{K,\theta, c}^{-1}(U)$ we have
\[ G_{K,\theta,c}(z)  = \frac{ G_{K,\theta,c}(H_{K,\theta,c}) }{2} \leq \frac{T}{2}.\]
By induction, for $z\in H_{K,\theta,c}^{-n}(U)$, we have
\[ G_{K,\theta,c}(z) \leq \frac{ T}{2^n}.\]
From this we conclude that $G_{K,\theta,c}(z) \to 0$ as $z\to \partial I(H_{K,\theta,c})$. We may therefore extend $G_{K,\theta,c}$ to all of $\C$ by setting it equal to $0$ on $BO(H_{K,\theta,c})$. This completes the proof.
\end{proof}

\section{Equipotentials}
\label{sec:equi}

We use our version of Green's function to define equipotentials.

\begin{definition}
\label{def:equip}
Let $K>1$, $\theta\in (-\pi/2,\pi/2]$ and $c\in \C$. For any $t> 0$, we define
\[ E_{K,\theta,c}(t) = \{ z\in \C : G_{K,\theta,c}(z) =t \} \]
and 
\[ U_{K,\theta,c}(t) = \{ z\in \C : G_{K,\theta,c}(z) > t \}.\]
\end{definition}

We could also make these definitions for $t=0$, but in this case $E_{K,\theta,c}(0) = BO(H_{K,\theta,c})$ and $U_{K,\theta,c}(0) = I(H_{K,\theta,c})$. If the context is clear, we will just write $E(t)$ and $U(t)$. 

\begin{lemma}
\label{lem:euconj}
Let $K>1$, $\theta\in (-\pi/2,\pi/2]$ and $c\in \C$.
For any $n\in \N$ and any $t>0$, we have
\[ E(2^nt) = H_{K,\theta,c}^n(E(t)) \quad \text{ and } \quad U(2^nt) = H_{K,\theta,c}^n(U(t)).\]
\end{lemma}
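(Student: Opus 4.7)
The plan is to derive the lemma directly from the functional equation $G_{K,\theta,c}(H_{K,\theta,c}(z)) = 2G_{K,\theta,c}(z)$ established in Theorem \ref{thm:green}, combined with the surjectivity of $H_{K,\theta,c}$.

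First I would iterate the functional equation: an easy induction on $n$ gives
\[
G_{K,\theta,c}(H_{K,\theta,c}^n(z)) = 2^n G_{K,\theta,c}(z)
\]
for every $z\in\C$ and every $n\in\N$. This already yields one inclusion for each of the two claimed equalities: if $z\in E(t)$ then $G_{K,\theta,c}(H_{K,\theta,c}^n(z)) = 2^n t$, so $H_{K,\theta,c}^n(z)\in E(2^n t)$; and similarly $H_{K,\theta,c}^n(U(t))\subseteq U(2^n t)$.

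For the reverse inclusions I would use that $H_{K,\theta,c} = P_c\circ h_{K,\theta}$ is surjective onto $\C$ (since $P_c$ is surjective and $h_{K,\theta}$ is a homeomorphism), hence $H_{K,\theta,c}^n$ is too. Given $w\in E(2^n t)$, pick any $z\in\C$ with $H_{K,\theta,c}^n(z)=w$; then the iterated functional equation forces $G_{K,\theta,c}(z) = 2^{-n} G_{K,\theta,c}(w) = t$, so $z\in E(t)$ and $w\in H_{K,\theta,c}^n(E(t))$. The argument for $U(2^n t)\subseteq H_{K,\theta,c}^n(U(t))$ is identical, replacing the equality $G(w)=2^n t$ by the inequality $G(w)>2^n t$, which gives $G(z)>t$.

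There is no real obstacle here; the only point worth noting is that we implicitly rely on both the globality of $G_{K,\theta,c}$ (so that the functional equation and the preimage $z$ above make sense, rather than being confined to a neighborhood of $\infty$) and the complete invariance of $I(H_{K,\theta,c})$ (which ensures that the preimage $z$ of a point $w\in E(2^n t)\subseteq I(H_{K,\theta,c})$ lies again in $I(H_{K,\theta,c})$, consistent with $G_{K,\theta,c}(z)=t>0$). Both of these were already secured in the proof of Theorem \ref{thm:green}, so the lemma follows at once.
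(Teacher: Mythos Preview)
Your argument is correct and follows essentially the same route as the paper: both proofs use the iterated functional equation from Theorem~\ref{thm:green} together with surjectivity of $H_{K,\theta,c}^n$ to obtain the two inclusions. You are simply more explicit about why $H_{K,\theta,c}$ is surjective, whereas the paper leaves this implicit when it writes ``For any $w\in \C$ such that $H^n_{K,\theta,c}(w) = z$''; your remark on complete invariance of $I(H_{K,\theta,c})$ is a consistency check rather than a needed step.
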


\begin{proof}
Suppose that $z\in E(2^nt)$ so that $G_{K,\theta,c}(z) =2^nt$. For any $w\in \C$ such that $H^n_{K,\theta,c}(w) = z$, by Theorem \ref{thm:green}, we have
\[ t = \frac{ G_{K,\theta,c}(z) }{2^n} = \frac{G_{K,\theta,c}(H_{K,\theta,c}^n(w))}{2^n} = G_{K,\theta,c}(w)\]
and $w\in E(t)$. Thus $z\in H_{K,\theta,c}^n(E(t))$. The reverse inclusion holds analogously which gives the first result. The analogous result for $U$ follows by a judicious replacement of an equality by an inequality in the above.
\end{proof}

\begin{lemma}
\label{lem:eubdry}
Let $K>1$, $\theta\in (-\pi/2,\pi/2]$ and $c\in \C$.
Then for any $t>0$, we have $E(t) = \partial U(t)$.
\end{lemma}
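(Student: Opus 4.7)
The plan is to establish the two inclusions $\partial U(t)\subseteq E(t)$ and $E(t)\subseteq\partial U(t)$ separately, using continuity of $G_{K,\theta,c}$ from Theorem \ref{thm:green}, the functional equation, and openness of the quasiregular iterate $H_{K,\theta,c}^n$.

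For $\partial U(t)\subseteq E(t)$: continuity of $G_{K,\theta,c}$ shows that $U(t)=G_{K,\theta,c}^{-1}((t,\infty))$ is open, so $\partial U(t)=\overline{U(t)}\setminus U(t)$. If $z\in\partial U(t)$, then approximating $z$ by a sequence in $U(t)$ gives $G_{K,\theta,c}(z)\geq t$ by continuity, while $z\notin U(t)$ gives $G_{K,\theta,c}(z)\leq t$. Hence $G_{K,\theta,c}(z)=t$ and $z\in E(t)$.

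The reverse inclusion requires showing that every neighborhood of a point $z\in E(t)$ meets $U(t)$. Since $G_{K,\theta,c}(z)=t>0$, we have $z\in I(H_{K,\theta,c})$, so some iterate $w:=H_{K,\theta,c}^n(z)$ lies in $W_{K,\theta,c}$; by Lemma \ref{lem:euconj}, $G_{K,\theta,c}(w)=2^n t$. On $W_{K,\theta,c}$ the Green's function is explicit, $G_{K,\theta,c}=\log\tau_{K,\theta,0}\circ\varphi_{K,\theta,c}$, and Lemma \ref{lem:tau scale} says $\tau_{K,\theta,0}$ strictly increases along any ray from the origin. Therefore, arbitrarily close to $w$, one finds $w'\in W_{K,\theta,c}$ with $G_{K,\theta,c}(w')>2^n t$, for instance $w'=\varphi_{K,\theta,c}^{-1}(r\varphi_{K,\theta,c}(w))$ with $r$ slightly greater than $1$.

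To pull this back to $z$, I use that the quasiregular map $H_{K,\theta,c}^n$ is open: given any open neighborhood $V$ of $z$, the image $H_{K,\theta,c}^n(V)$ is an open neighborhood of $w$ and therefore meets $\{G_{K,\theta,c}>2^n t\}\cap W_{K,\theta,c}$. Choosing a preimage $v\in V$ of such a point and applying the functional equation from Theorem \ref{thm:green} gives $G_{K,\theta,c}(v)=G_{K,\theta,c}(H_{K,\theta,c}^n(v))/2^n>t$, so $V\cap U(t)\neq\emptyset$. Combined with $z\notin U(t)$, this yields $z\in\partial U(t)$. The only genuinely delicate step is producing nearby points of strictly larger $G$-value in $W_{K,\theta,c}$, which is what the radial scaling of $\tau_{K,\theta,0}$ delivers cleanly; the rest is continuity, the functional equation, and openness of quasiregular maps.
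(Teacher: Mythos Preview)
Your proof is correct and follows essentially the same approach as the paper: both obtain $\partial U(t)\subseteq E(t)$ from continuity of $G_{K,\theta,c}$, and for the reverse inclusion both push forward to $W_{K,\theta,c}$ via the functional equation, use the radial monotonicity of $\tau_{K,\theta,0}$ (together with $\varphi_{K,\theta,c}$ being a homeomorphism) to produce nearby points of strictly larger $G$-value, and pull back using openness of the iterates. The paper phrases the second step as a contradiction (a local maximum of $G_{K,\theta,c}$ at $z$ would persist under iteration and contradict the structure near infinity), while you give the direct construction; the content is the same.
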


\begin{proof}
By the continuity of $G_{K,\theta,c}$, it is evident that $\partial U(t) \subset E(t)$. On the other hand, if $z\in E(t) \setminus \partial U(t)$, then $z$ would be a local maximum for $G_{K,\theta,c}$. By Theorem \ref{thm:green}, $H^n_{K,\theta,c}(z)$ would also be a local maximum of $G_{K,\theta,c}$ for all $n$. However, for large $|z|$, we have $G_{K,\theta,c} = (\log \tau_{K,\theta,0}) \circ \varphi_{K,\theta,c}$. As $\varphi_{K,\theta,c}$ is quasiconformal and $\tau_{K,\theta,0}$ has no local maxima, this is a contradiction. The result follows.
\end{proof}

\begin{lemma}
\label{lem:euprops}
Let $K>1$, $\theta\in (-\pi/2,\pi/2]$ and $c\in \C$.
\begin{enumerate}[(a)]
\item There exists $T>0$ such that if $t>T$ then $E(t)$ is a simple closed curve.
\item For any $t>0$, $E(t)$ is a finite collection of closed curves and $U(t)$ is an open neighborhood of infinity. 
\item Suppose that $0<s<t$. Then $\overline{U(t)}$ is contained in $U(s)$. Moreover, every component of $E(s)$ is contained in a bounded component of $\C \setminus E(t)$.
\end{enumerate}
\end{lemma}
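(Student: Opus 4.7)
My plan is to treat the three parts in the order (a), (b), (c). Part (a) is the substantive step; (b) then follows from (a) by pulling back through the functional equation, and (c) is a short topological consequence of (b) together with continuity of $G_{K,\theta,c}$.

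For (a), on $W_{K,\theta,c}$ one has $G_{K,\theta,c} = (\log \tau_{K,\theta,0}) \circ \varphi_{K,\theta,c}$ by Definitions \ref{def:tauc} and \ref{def:gktc}, and by the scaling in Lemma \ref{lem:tau scale} the level set $\tau_{K,\theta,0}^{-1}(e^t)$ equals $e^t \cdot \partial I(H_{K,\theta,0})$. The crux is to show that $\partial I(H_{K,\theta,0})$ is a Jordan curve, which I would establish by proving that the radial function $\phi \mapsto |b_{K,\theta}(\phi)|$ is continuous on $[0,2\pi)$. Using Lemmas \ref{lem:bo}, \ref{lem:esc} and \ref{lem:star}, $BO(H_{K,\theta,0})$ is a closed, bounded, star-like set with $0$ in its interior. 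Upper semi-continuity of $|b_{K,\theta}|$ is immediate: if $\phi_n \to \phi_0$ and $|b_{K,\theta}(\phi_n)| \to L$, then $L e^{i\phi_0} \in BO(H_{K,\theta,0})$ by closedness, so $L \leq |b_{K,\theta}(\phi_0)|$. Lower semi-continuity uses Lemma \ref{lem:star}: for any $r < |b_{K,\theta}(\phi_0)|$, the point $r e^{i\phi_0}$ lies on the ray $R_{\phi_0}$ but not on $\partial I(H_{K,\theta,0})$, so it sits in the topological interior of $BO(H_{K,\theta,0})$; hence $r e^{i\phi} \in BO(H_{K,\theta,0})$ for all $\phi$ close to $\phi_0$, forcing $|b_{K,\theta}(\phi)| \geq r$. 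Combined with the normalization $\arg b_{K,\theta}(\phi) = \phi$, continuity of $|b_{K,\theta}|$ makes $\phi \mapsto b_{K,\theta}(\phi)$ a Jordan parametrization of $\partial I(H_{K,\theta,0})$, and therefore of every dilation $e^t \cdot \partial I(H_{K,\theta,0})$. Finally, since $\varphi_{K,\theta,c}$ extends to a homeomorphism at $\infty$ with $\varphi_{K,\theta,c}(\infty) = \infty$ and $|b_{K,\theta}|$ is bounded, $G_{K,\theta,c}(z) \to \infty$ as $z \to \infty$; so for $t$ sufficiently large, $E(t) \subset W_{K,\theta,c}$ and $E(t) = \varphi_{K,\theta,c}^{-1}(e^t \cdot \partial I(H_{K,\theta,0}))$ is the quasiconformal image of a Jordan curve.

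For (b), fix $t > 0$ and choose $n \in \N$ with $2^n t > T$ from part (a). By Lemma \ref{lem:euconj} we have $E(t) = H_{K,\theta,c}^{-n}(E(2^n t))$ and $U(t) = H_{K,\theta,c}^{-n}(U(2^n t))$. The iterate $H_{K,\theta,c}^n$ is a proper quasiregular self-map of $\C$ of degree $2^n$ whose branch set is the finite union $\bigcup_{k=0}^{n-1} H_{K,\theta,c}^{-k}(0)$, so the preimage of the Jordan curve $E(2^n t)$ is a finite union of closed curves (disjoint Jordan arcs away from the finitely many critical values, with local bouquets at them). The set $U(t)$ is open as a continuous preimage of an open set, and since $U(2^n t)$ contains the complement of a large disk and $H_{K,\theta,c}^n$ is proper, $U(t)$ also contains the complement of some disk, so is a neighborhood of $\infty$. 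For (c), the inclusion $\overline{U(t)} \subset U(s)$ is immediate from continuity of $G_{K,\theta,c}$, since $z \in \overline{U(t)}$ forces $G_{K,\theta,c}(z) \geq t > s$. By (b) the compact set $E(t)$ is a finite union of closed curves, so $\C \setminus E(t)$ has finitely many components with exactly one unbounded one, and the latter lies in $U(t)$. Every component of $E(s)$ lies in $\{G_{K,\theta,c} = s\} \subset \{G_{K,\theta,c} < t\}$, which is bounded, and is therefore contained in a bounded component of $\C \setminus E(t)$. The one geometrically nontrivial step is the Jordan-curve argument in (a); the rest is routine bookkeeping with the functional equation \eqref{eq:gktcsemiconj} and standard properties of proper branched covers.
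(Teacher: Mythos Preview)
Your proof is correct and follows the same overall strategy as the paper: use the B\"ottcher coordinate and Definition~\ref{def:gktc} for (a), pull back via Lemma~\ref{lem:euconj} for (b), and use continuity of $G_{K,\theta,c}$ for (c). The one substantive difference is that you actually justify why $\tau_{K,\theta,0}^{-1}(e^t)$ is a Jordan curve by proving continuity of $\phi\mapsto |b_{K,\theta}(\phi)|$ from the star-shapedness in Lemma~\ref{lem:star}, whereas the paper simply asserts this; your argument there is correct and fills a small gap in the exposition.
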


\begin{proof}
For part (a), suppose first that $t>0$ is large enough that $G_{K,\theta,c}^{-1}(t) = \tau_{K,\theta,c}^{-1}(e^t) \subset W_{K,\theta,c}$, recalling that this latter set is where the B\"ottcher coordinate $\varphi_{K,\theta,c}$ is defined. As $\tau_{K,\theta,c} =  \tau_{K,\theta,0}\circ \varphi_{K,\theta,c}$, we see that
\[ G_{K,\theta,c}^{-1}(t) = \varphi_{K,\theta,c}^{-1} ( \tau_{K,\theta,0}^{-1}(e^t) ).\]
As $\tau_{K,\theta,0}^{-1}(e^t)$ is a simple closed curve, and as $\varphi_{K,\theta,c}$ is quasiconformal, we see that $E(t)$ is a simple closed curve for large enough $t$.

For part (b), suppose that $t>0$ is arbitrary and suppose $n\in \N$ is chosen large enough that part(a) applies to show $E(2^nt)$ is a simple closed curve. By Lemma \ref{lem:euconj}, as $H_{K,\theta,c}^n$ is a finite degree quasiregular map, then the inverse image of $E(2^nt)$ under $H_{K,\theta,c}^n$ is a finite collection of closed curves. 

As $G_{K,\theta,c}$ is continuous, it follows that $U(t)$ is open and connected for all $t>0$. Moreover, by Lemma \ref{lem:eubdry}, $U(t)$ is the unbounded component of the complement of $E(t)$ and hence is a neighborhood of infinity.

For part (c), if $z\in U(t)$, then $G_{K,\theta,c}(z)>t>s$, so $z\in U(s)$. Suppose that $z \in \partial U(t)$. Then $G_{K,\theta,c}(z)\leq t$. Any open neighborhood of $z$ contains an element $w$ such that $G_{K,\theta,c}(w)>t$, so $G_{K,\theta,c}(z)$ must equal $t$ since $G_{K,\theta,c}$ is continuous. It follows that $\partial U(t) \subset U(s)$. We conclude that $\overline{ U(t)} \subset U(s)$. Moreover, by Lemma \ref{lem:eubdry}, we see that $E(t) \subset U(s)$ and the final claim follows.
\end{proof}

Observe that each component of $E(t)$ need not be a simple closed curve. For example, if $c\notin \mathcal{M}_{K,\theta}$ then there exists $t_0>0$ such that $E(t_0)$ contains the critical point $0$ of $H_{K,\theta,c}$. Then $E(t_0)$ will be a topological figure eight. The image of $E(t_0)$ under $H_{K,\theta,c}$ will be a simple closed curve. We make these observations more precise in the following results.

\begin{lemma}
\label{lem:mandel}
Let $K>1$ and $\theta \in (-\pi/2,\pi/2]$. If $c\in \mathcal{M}_{K,\theta}$, then for any $t>0$, $U_{K,\theta,c}(t) \cup \{ \infty \}$ is a simply connected subdomain of $\C_{\infty}$ and $E_{K,\theta,c}(t)$ has one component.
\end{lemma}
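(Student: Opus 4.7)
The plan is to prove the lemma by downward induction on $t$, starting from large values of $t$ where Lemma \ref{lem:euprops}(a) already gives the conclusion, and iteratively pulling back by $H_{K,\theta,c}$ via the functional equation from Lemma \ref{lem:euconj}. The hypothesis $c \in \mathcal{M}_{K,\theta}$ enters only to ensure that both the critical point and the critical value of $H_{K,\theta,c}$ are absent from every equipotential with $t>0$.

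For the base case, Lemma \ref{lem:euprops}(a) supplies $T>0$ such that $E_{K,\theta,c}(t)$ is a simple closed curve for all $t \geq T$. By Lemma \ref{lem:eubdry} and Lemma \ref{lem:euprops}(b), $U_{K,\theta,c}(t) \cup \{\infty\}$ is then the unbounded component of $\C_\infty \setminus E_{K,\theta,c}(t)$, hence a topological disk bounded by a single Jordan curve.

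For the inductive step, assume $U_{K,\theta,c}(2s) \cup \{\infty\}$ is simply connected with $E_{K,\theta,c}(2s)$ a single simple closed curve; I will show the same at level $s$. By Lemma \ref{lem:euconj}, $U_{K,\theta,c}(s) = H_{K,\theta,c}^{-1}(U_{K,\theta,c}(2s))$. Since $c \in \mathcal{M}_{K,\theta}$, the critical point $0$ lies in $BO(H_{K,\theta,c})$, so $G_{K,\theta,c}(0) = 0$; applying Theorem \ref{thm:green} to the critical value $c = H_{K,\theta,c}(0)$ then gives $G_{K,\theta,c}(c) = 2 G_{K,\theta,c}(0) = 0$. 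Hence neither $0$ nor $c$ lies in $\overline{U_{K,\theta,c}(2s)}$, so $E_{K,\theta,c}(2s)$ contains no critical values of $H_{K,\theta,c}$. The preimage $E_{K,\theta,c}(s) = H_{K,\theta,c}^{-1}(E_{K,\theta,c}(2s))$ is therefore a smoothly embedded $1$-manifold in $\C$, i.e., a finite disjoint union of simple closed curves, so Theorem \ref{thm:rh} may be applied to $D_2 = U_{K,\theta,c}(2s)\cup\{\infty\}$ and $D_1 = U_{K,\theta,c}(s)\cup\{\infty\}$. The restriction $H_{K,\theta,c}: D_1 \to D_2$ is a proper quasiregular map of degree $d=2$ whose only branch point in $D_1$ is $\infty$ (as $0 \notin D_1$), so $L = 1$, while $d_2 = 1$ by hypothesis. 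Riemann-Hurwitz yields
\[ 2 - d_1 = 2(2 - 1) - 1 = 1, \]
so $d_1 = 1$. Hence $E_{K,\theta,c}(s)$ is a single simple closed curve and $D_1$ is simply connected.

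For an arbitrary $t>0$, choose $N \in \N$ with $2^N t \geq T$, apply the base case at level $2^N t$, and run the inductive step $N$ times down to $t$. The main obstacle is the step that $E_{K,\theta,c}(s) = H_{K,\theta,c}^{-1}(E_{K,\theta,c}(2s))$ is genuinely a union of simple closed curves rather than a singular $1$-complex, since Theorem \ref{thm:rh} as stated requires this hypothesis; resolving this is exactly where $c \in \mathcal{M}_{K,\theta}$ is used, via the observation that the critical value $c$ also has vanishing Green's function and therefore avoids every equipotential $E_{K,\theta,c}(2s)$ with $s>0$.
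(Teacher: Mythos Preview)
Your proof is correct and follows essentially the same approach as the paper: both arguments start from the large-$t$ case supplied by Lemma~\ref{lem:euprops}(a) and then apply the Riemann--Hurwitz formula (Theorem~\ref{thm:rh}) to the proper map $H_{K,\theta,c}$ between superlevel sets, using $c\in\mathcal{M}_{K,\theta}$ to ensure the only branch point upstairs is $\infty$. The only cosmetic difference is that the paper applies Riemann--Hurwitz once to $H_{K,\theta,c}^{n}$ (with $L=2^{n}-1$, $d=2^{n}$) rather than iterating the single-step version; your version has the mild advantage of making explicit why the boundary of $D_{1}$ consists of \emph{simple} closed curves, which the statement of Theorem~\ref{thm:rh} requires.
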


\begin{proof}
Fix $c\in \mathcal{M}_{K,\theta}$. For $t>0$, set $U(t) = U_{K,\theta,c}(t)$.
By Lemma \ref{lem:euprops} (a) and (b), there exists $T>0$ such that if $t>T$, then $U(t)$ has the claimed properties.

Suppose now that $t>0$ is arbitrary and find $n\in \N$ such that $2^nt > T$. We will apply the Riemann-Hurwitz formula, Theorem \ref{thm:rh}, with $f=H_{K,\theta,c}^n$, $D_1= U(t)\cup\{\infty \}$ and $D_2 = U(2^nt)\cup \{ \infty \}$. We observe that $H_{K,\theta,c}^n$ is a proper map, as if $X\subset D_2$ is compact, then $X \subset U(s)$ for some $s>2^nt$ and so $H^{-n}_{K,\theta,c}(X) \subset U(2^{-n}s)$ which is compactly contained in $D_1$.

The degree of $H_{K,\theta,c}^n$ is $2^n$. As $c\in \mathcal{M}_{K,\theta}$, $0\notin I(H_{K,\theta,c})$ and so the number of branch points of $H^n_{K,\theta,c}$ in $D_1$ counting multiplicity just comes from the point at infinity, and is thus $L = 2^n-1$. As $D_2$ has only one boundary component, we have $d_2 = 1$. Solving
\[ 2-d_1 = 2^n(2-1) - 2^n+1\]
for $d_1$ yields $d_1 = 1$. We conclude that $U(t)$ has one boundary component and hence $U(t) \cup \{\infty \}$ is simply connected.
\end{proof}

\begin{lemma}
\label{lem:notmandel}
Let $K>1$ and $\theta \in (-\pi/2,\pi/2]$. If $c\notin \mathcal{M}_{K,\theta}$, set $t_0 = G_{K,\theta,c}(0) > 0$. If $t\geq t_0$ then $U_{K,\theta,c}(t) \cup \{ \infty \}$ is a simply connected subdomain of $\C_{\infty}$ and $E_{K,\theta,c}(t)$ has one component. If $t<t_0$, let 
\begin{equation}
\label{eq:mthing} 
m = \left \lceil \frac{ \ln(t_0/t) }{\ln 2}\right \rceil \in \N.
\end{equation}
Then $E_{K,\theta,c}(t)$ has $2^m$ components and $U_{K,\theta,c}(t)$ is $2^m$-connected.
\end{lemma}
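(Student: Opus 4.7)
The strategy would be to extend that of Lemma \ref{lem:mandel}, with the new feature that since $c \notin \mathcal{M}_{K,\theta}$, the critical point $0$ of $H_{K,\theta,c}$ now escapes, so the iterates $H_{K,\theta,c}^n$ acquire finite branch points inside the pullback domains. The key observation is that by the functional equation of Theorem \ref{thm:green}, any $z$ with $H_{K,\theta,c}^k(z) = 0$ satisfies $G_{K,\theta,c}(z) = t_0/2^k$. In the case $t \geq t_0$, all such critical points have $G$-value at most $t_0 \leq t$, so they lie outside $U_{K,\theta,c}(t)$, and the proof of Lemma \ref{lem:mandel} carries over essentially verbatim: for $n$ large, Lemma \ref{lem:euprops}(a) gives $d_2=1$, the total branching $L=2^n-1$ comes from $\infty$ alone, and Riemann-Hurwitz yields $d_1=1$.

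For $t < t_0$, I would first treat the non-critical sub-case $t \neq t_0/2^j$ for every $j \in \N$. Then $E_{K,\theta,c}(t) = (H_{K,\theta,c}^n)^{-1}(E_{K,\theta,c}(2^n t))$ is a disjoint union of simple closed curves, whose count equals $d_1$ in Riemann-Hurwitz. To compute the total branching $L$ of $H_{K,\theta,c}^n$ on the pullback domain, the plan is to iterate the compositional identity $L_{g \circ f} = (\deg f)\, L_g + L_f$, a direct consequence of applying Riemann-Hurwitz twice. This yields $L = \sum_{k=0}^{n-1} 2^k L_{2^k t}$, where $L_s$ denotes the branching of the single map $H_{K,\theta,c} : U_{K,\theta,c}(s) \cup \{\infty\} \to U_{K,\theta,c}(2s) \cup \{\infty\}$. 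One checks that $L_s = 1$ for $s \geq t_0$ (only $\infty$ is a branch) and $L_s = 2$ for $s < t_0$ (both $\infty$ and $0$ contribute). Since $2^k t < t_0$ exactly when $k < \log_2(t_0/t)$, which holds for precisely $m$ values of $k$, choosing $n > m$ gives
\[ L = 2(2^m - 1) + (2^n - 2^m) = 2^n + 2^m - 2. \]
Applying Riemann-Hurwitz with $d_2 = 1$ (secured for large $n$ by Lemma \ref{lem:euprops}(a)) then delivers $2 - d_1 = 2^n - L = 2 - 2^m$, hence $d_1 = 2^m$.

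For the remaining critical parameters $t = t_0/2^j$ with $j \geq 1$ (where $m = j$), my plan is to argue by perturbation from above: any $t' \in (t_0/2^j, t_0/2^{j-1})$ has the same value of $m$, so the previous step gives $E_{K,\theta,c}(t')$ with $2^m$ simple-closed-curve components. Letting $t' \to t^+$, a local analysis near each of the $2^j$ preimages of $0$ under $H_{K,\theta,c}^j$ lying on $E_{K,\theta,c}(t)$ should show that each component of $E_{K,\theta,c}(t')$ converges in Hausdorff distance to a connected subset of $E_{K,\theta,c}(t)$ which develops a self-intersection, while distinct components do not merge. Hence $E_{K,\theta,c}(t)$ still has $2^m$ connected components, and combined with the connectedness of $U_{K,\theta,c}(t)$ from Lemma \ref{lem:euprops}(b) and with $E_{K,\theta,c}(t) = \partial U_{K,\theta,c}(t)$ from Lemma \ref{lem:eubdry}, this makes $U_{K,\theta,c}(t)$ into a $2^m$-connected subset of $\C$.

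The main obstacle I expect is the iterated branching computation in the second paragraph: one must carefully identify which critical points of $H_{K,\theta,c}^n$ lie inside $U_{K,\theta,c}(t)$ as a function of $t$, and the compositional identity for total branching is what reduces this inventory to a tractable geometric-series sum. A secondary technical point is the Hausdorff-limit argument at the critical levels $t = t_0/2^j$, which turns on showing that level-set components do not merge as $t' \to t^+$; this is plausible from the Morse-theoretic picture at each local branch point, but must be verified using the specific quasiregular local model of $H_{K,\theta,c}^{j+1}$ near a preimage of $0$.
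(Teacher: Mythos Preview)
Your approach is correct, but the paper's proof is more economical. The key simplification you miss is that once the first part of the lemma is established (namely $d_2=1$ for the target $U_{K,\theta,c}(s)\cup\{\infty\}$ whenever $s\ge t_0$), one can apply Riemann--Hurwitz directly to $H_{K,\theta,c}^m$ with $D_2=U_{K,\theta,c}(2^m t)\cup\{\infty\}$, rather than pushing out to $H_{K,\theta,c}^n$ with $n$ large. With this choice the finite branch points of $H^m$ in $D_1$ are exactly $\bigcup_{j=0}^{m-1} H^{-j}(0)$, each simple, so $L=(2^m-1)+\sum_{j=0}^{m-1}2^j=2(2^m-1)$ and Riemann--Hurwitz gives $d_1=2^m$ in one line. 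Your compositional identity $L_{g\circ f}=(\deg f)L_g+L_f$ and the geometric-series bookkeeping recover the same number, just via a longer route; note also that this identity follows from the multiplicativity of local degree and need not be justified by two applications of Riemann--Hurwitz (which would be mildly circular, since the intermediate domains are exactly what you are trying to analyze). On the other hand, your explicit case split at the critical levels $t=t_0/2^j$ is arguably more careful than the paper, which does not isolate the figure-eight boundary of $U(t_0)$ when $2^m t=t_0$; your perturbation idea (showing components do not merge as $t'\searrow t$) is a legitimate way to close that gap, though it could be replaced by simply observing that the Riemann--Hurwitz count is insensitive to whether boundary components are embedded or have isolated nodes.
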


\begin{proof}
Fix $c\notin \mathcal{M}_{K,\theta}$ and for $t>0$ set $U(t) = U_{K,\theta,c}(t)$.
The proof of Lemma \ref{lem:mandel} shows that as long as the critical point $0$ does not lie in $U(t)$, then $U(t) \cup \{ \infty \}$ is simply connected. With $t_0 = G_{K,\theta,c}(0)$, $E(t_0)$ is a figure-eight shape, but $U(t_0) \cup \{\infty\}$ is still simply connected.

Now suppose that $t<t_0$ and $m$ is the smallest integer such that $2^mt \geq t_0$. Then $m$ is given by \eqref{eq:mthing}. We will apply the Riemann-Hurwitz formula to the proper map $H^m_{K,\theta,c}$ with $D_1 = U(t) \cup \{\infty \}$ and $D_2 =U(2^mt) \cup \{ \infty \}$. The critical points of $H^m_{K,\theta,c}$ contained in $D_1$ are at infinity, with multiplicity $2^m-1$, and in the set $\{ H_{K,\theta,c}^{-j} (0) : j=0 ,\ldots, m-1 \}$, each with multiplicity $1$. As $H_{K,\theta,c}^{-j} (0)$ contains $2^j$ elements, we have 
\[L = (2^m-1) + (1 + \ldots + 2^{m-1}) = 2(2^m-1).\]
The number of boundary components of $D_2$ is $d_2 = 1$ and so the Riemann-Hurwitz formula gives
\[ 2-d_1 = 2^m ( 2-1) - 2(2^m-1),\]
which implies $d_1 = 2^m$ as required.
\end{proof}

We are now in a position to prove that the bounded orbit set contains either one, or uncountably many, components.

\begin{proof}[Proof of Theorem \ref{thm:conn}]
By Lemma \ref{lem:euprops} (c), we can reformulate the bounded orbit set as
\begin{equation} 
\label{eq:bointersection}
BO(H_{K,\theta,c}) = \bigcap _{t>0} \left ( \C \setminus U_{K,\theta,c}(t) \right ).
\end{equation}
If $c\in \mathcal{M}_{K,\theta}$, then Lemma \ref{lem:mandel} implies that $BO(H_{K,\theta,c})$ contains one component.

On the other hand, suppose that $c\notin \mathcal{M}_{K,\theta}$. Set $t_0 = G_{K,\theta,c}(0) >0$ and choose $t_1 \in (t_0 , 2t_0)$. Then by the proof of Lemma \ref{lem:notmandel}, $E(t_1)$ is a simple closed curve and $E(t_1/2)$ consists of two simple closed curves. Denote by $X_0$ the bounded component of the complement of $E(t_1)$ and by $Y_{j}$, for $j=1,2$, the two bounded components of the complement of $E(t_1/2)$. Then $H_{K,\theta,c}$ restricted to each $Y_{j}$ is a homeomorphism onto $X_0$ and we may therefore define two branches of the inverse, say
\[ g_j : X_0 \to Y_{j}\]
for $j=1,2$. Evidently, $\overline{Y_{j}} \subset X_0$ for $j=1,2$. 

Here it is convenient to introduce symbolic notation. Given an integer $k\geq 0$, we denote by $\{g_1,g_2\}^k$ the set of words formed from the alphabet $\{g_1,g_2\}$ that have length exactly $k$. Conventionally, we set $\{g_1,g_2\}^0 = \{\varepsilon\}$ where $\varepsilon$ is the empty word. We also denote by $\{g_1,g_2\}^* = \bigcup_{k\geq 0}\{g_1,g_2\}^k$ the set of all finite words formed from $\{g_1,g_2\}$. Given a word $w \in \{g_1,g_2\}^*$, we denote by $|w|$ the length of $w$ with the convention $|\varepsilon| = 0$.

With this convention, for $w\in \{g_1,g_2 \}^*$, we can define 
\[ X_w = w(X_0),\]
so that, for example, $X_{g_1g_2} = g_1(g_2(X_0))$. By Lemma \ref{lem:euprops} (c), we can rewrite \eqref{eq:bointersection} as
\[ BO(H_{K,\theta,c}) = \bigcap _{w \in \{g_1,g_2\}^*} X_w.\]
Every component of $BO(H_{K,\theta,c})$ can be identified with an infinite word from the alphabet $\{g_1,g_2\}$, and by a standard diagonal argument, the number of such components is uncountable.
\end{proof}

\section{Fixed and periodic points}
\label{sec:fixed}

In this section, we will focus on the case where $c\notin\mathcal{M}_{K,\theta}$ and so $BO(H_{K,\theta,c})$ has uncountably many components.

\begin{proof}[Proof of Theorem \ref{thm:perpts}]
Fix $K>1$, $\theta \in (-\pi/2, \pi/2]$, $c\notin \mathcal{M}_{K,\theta}$ and $n\in \N$ such that $H_{K,\theta,c}$ has at least $2^n+1$ periodic points of period $n$. Then $H^n_{K,\theta,c}$ has at least $2^n+1$ fixed points. Using the symbolic notation established in the proof of Theorem \ref{thm:conn}, and Lemma \ref{lem:notmandel}, the components of $H^{-n}_{K,\theta,c}(X_0)$ can be enumerated by $w\in \{g_1,g_2\}^n$. There are therefore $2^n$ components.

By the pigeonhole principle, at least two of the fixed points of $H_{K,\theta,c}^n$, say $z_1\neq z_2$, must be in the same component, say $A = w(X_0)$. 
Consider now $w(A)$. As $w$ represents a homeomorphism, $w(A)$ is connected and contains $z_1$ and $z_2$. Repeating this argument, we see that 
\[ \{ z_1,z_2 \} \subset \bigcap_{k\geq 1} w^k(X_0),\]
which means that $z_1,z_2$ are contained in the same component of $BO(H_{K,\theta,c})$. We conclude that the bounded orbit set is not totally disconnected.
\end{proof}

We will now show that it is possible for the hypothesis of Theorem \ref{thm:perpts} to hold. Here, we will set $\theta = 0$ and $c\in \R$ so that the mapping we consider is
\[ H_{K,0,c}(x+iy) = (K^2x^2-y^2+c ) +i(2Kxy).\]
Solving for fixed points, by equating real and imaginary parts we obtain the two equations
\[ K^2x^2-y^2+c = x, \quad \text{ and } 2Kxy= y.\]
The second of these clearly has solutions $y=0$ or $x=(2K)^{-1}$. In the first case when $y=0$, the first equation yields
\[ K^2x^2-x+c=0 \implies x = \frac{ 1 \pm \sqrt{1-4K^2c} }{2K^2}.\]
This has real solutions if and only if $c\leq (4K^2)^{-1}$. On the other hand, if $x=(2K)^{-1}$, then the first equation yields
\[ \frac{1}{4} - y^2 - \frac{1}{2K} + c =0 \implies y = \pm \sqrt{ \frac{1}{4} - \frac{1}{2K} + c }.\]
This has real solutions if and only if $c\geq (2K)^{-1} - 1/4$. It is worth pointing out that we always have
\[ \frac{1}{2K} - \frac{1}{4} \leq \frac{1}{4K^2}\]
as rearranging this yields $(K-1)^2 \geq 0$.

By analyzing the four cases that can occur here, we see that $H_{K,0,c}$ has four fixed points if $c$ lies in the interval 
\[ \left( \frac{1}{2K} - \frac{1}{4} , \frac{1}{4K^2}\right ),\]
three fixed points if $c$ is at either endpoint of this interval, and two fixed points otherwise.

Next, we show that there are parameters $c$ for which $H_{K,0,c}$ has four fixed points and for which $c\notin \mathcal{M}_{K,0}$. To this end, recall from \cite[Theorem 6.4]{FG10} that 
\[ \mathcal{M}_{K,0} \cap \R = \left [ -\frac{2}{K^2} , \frac{1}{4K^2} \right ].\]
The right hand endpoint of this interval agrees with the endpoint of the interval above, and so we need to analyze the left hand endpoints. We want
\[ \frac{1}{2K} - \frac{1}{4} < -\frac{2}{K^2},\]
which after some elementary algebra simplifies to $K>4$, recalling that we assume $K>1$. We conclude that if this is so, then for 
\[ c \in \left ( \frac{1}{2K} - \frac{1}{4} , -\frac{2}{K^2} \right )\]
the map $H_{K,0,c}$ has four fixed points, and hence by Theorem \ref{thm:perpts}, $BO(H_{K,0,c})$ has uncountably many components, but is not totally disconnected.

\section{Classification of fixed points}
\label{sec:classify}

It is clear that if $z_0$ is a fixed point of $P_c(z) = z^2+c$, then it is attracting, neutral or repelling according to whether $|z_0|$ is less than, equal to, or greater than $\tfrac12$ respectively. Here we establish the analogous result for $H_{K,0,c}$. We focus on the case where $\theta = 0$ for the sake of clarity of exposition, but the generalization to arbitrary $\theta$ may be computed similarly. Now we fix $K>1$ and $c\in \C$, although the role of $c$ in what follows is unimportant, and set $H = H_{K,0,c}$.

Recall that 
\[ H(x+iy)=(Kx+iy)^2+c=K^2x^2-y^2+\Re(c)+[2Kxy+\Im(c)]i\]
and so the derivative of $H$ is \lefteqn{\begin{bmatrix}2K^2x & -2y\\2Ky & 2Kx \end{bmatrix}.}

The eigenvalues of this matrix are solutions of the quadratic 
\[ \lambda^2+(-2K^2x-2Kx)\lambda+4K^3x^2+4Ky^2=0,\]
which are
\begin{equation} 
\label{eq:evals}
\lambda=K^2x+Kx \pm [(K^2x+Kx)^2-4(K^3x^2+Ky^2)]^{1/2} .
\end{equation}
We let $\lambda_1$ and $\lambda_2$ denote the two eigenvalues with the convention that $|\lambda_1|\leq |\lambda_2|$. We aim to partition the regions of $\mathbb{C}$ according to the absolute values of $|\lambda_1|$ and $|\lambda_2|$ in order to classify the type of fixed point at a given point $(x_0,y_0)$.
We point out that if the eigenvalues at $x+iy$ are given by $\lambda_1,\lambda_2$, and if $s\in \R$, then the eigenvalues at $(sx) +i (sy)$ are given by
\begin{equation}
\label{eq:scaleevals} 
s\lambda_1, s\lambda_2.
\end{equation}

First, we find where the eigenvalues are real or non-real.

\begin{lemma}
\label{lem:fp1}
Let $S_K^+$ be the sector given by 
\[ y > \frac{ \sqrt{K}(K-1)x }{2} \text{ and } y > - \frac{ \sqrt{K}(K-1)x }{2}, \]
and let $S_K^-$ be the sector given by
\[ y < \frac{ \sqrt{K}(K-1)x }{2} \text{ and } y < - \frac{ \sqrt{K}(K-1)x }{2}. \]
If $x+iy$ lies in either $S_K^{\pm}$, then $\lambda_1$ and $\lambda_2$ are non-real complex conjugates. Otherwise $\lambda_1$ and $\lambda_2$ are real. Moreover, if $x+iy$ lies on one of the two lines $L_K^{\pm}$ given by
\[ y = \pm \frac{ \sqrt{K}(K-1)x }{2},\]
then the two eigenvalues are the same (and real).
\end{lemma}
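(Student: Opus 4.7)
The strategy is direct and computational: analyze the discriminant of the quadratic defining the eigenvalues in \eqref{eq:evals}. The plan is to show that the sign of this discriminant is controlled exactly by the two lines $L_K^\pm$, and that the sectors $S_K^\pm$ are precisely the regions where the discriminant is negative.

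First I would expand and simplify the discriminant $\Delta(x,y) = (K^2x+Kx)^2 - 4(K^3x^2+Ky^2)$. Factoring $K^2x^2$ from the first two terms gives
\[ (K^2x+Kx)^2 - 4K^3x^2 = K^2x^2\left[(K+1)^2-4K\right] = K^2(K-1)^2 x^2, \]
so that $\Delta(x,y) = K^2(K-1)^2 x^2 - 4Ky^2 = K\bigl[K(K-1)^2 x^2 - 4y^2\bigr]$. Since $K>1$ by assumption, the sign of $\Delta$ agrees with the sign of $K(K-1)^2x^2 - 4y^2$.

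Next I would read off the three cases. The equation $\Delta(x,y)=0$ is equivalent to $4y^2 = K(K-1)^2 x^2$, i.e.\ $y = \pm \tfrac{\sqrt{K}(K-1)}{2}x$, which is exactly the definition of $L_K^\pm$; on these lines the quadratic \eqref{eq:evals} has a repeated real root, so $\lambda_1=\lambda_2\in\R$. The inequality $\Delta(x,y)<0$ is equivalent to $|y|>\tfrac{\sqrt{K}(K-1)}{2}|x|$, and a brief case check on the sign of $x$ shows that this condition together with $y>0$ describes $S_K^+$, while together with $y<0$ it describes $S_K^-$; the sectors therefore carve out the region where $\Delta<0$, and there the eigenvalues are non-real. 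Finally, $\Delta(x,y)>0$ in the complement of $S_K^+\cup S_K^-\cup L_K^+\cup L_K^-$, where the eigenvalues are real and distinct.

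To close the argument, I would note that since the derivative matrix has real entries, a non-real eigenvalue must occur together with its complex conjugate, giving $\lambda_2=\overline{\lambda_1}$ whenever $x+iy\in S_K^\pm$. The main obstacle is nothing more than keeping the signs straight when translating $\Delta<0$ into the sector descriptions; there is no subtlety beyond the algebra.
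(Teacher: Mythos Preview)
Your proposal is correct and follows essentially the same approach as the paper: both compute the discriminant $(K^2x+Kx)^2-4(K^3x^2+Ky^2)$, simplify it to $K^2(K-1)^2x^2-4Ky^2$, and read off the sign to identify the sectors and the boundary lines $L_K^{\pm}$. The paper factors this as $(K(K-1)x-2\sqrt{K}y)(K(K-1)x+2\sqrt{K}y)$ rather than writing it as $K[K(K-1)^2x^2-4y^2]$, but this is the same algebra; your added remark that the eigenvalues of a real matrix occur in conjugate pairs is a small clarification the paper leaves implicit.
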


\begin{proof}
It follows from \eqref{eq:evals} that $\lambda_1$ and $\lambda_2$ are non-real complex conjugates if and only if 
\[ (K^2x+Kx)^2-4(K^3x^2+Ky^2) < 0.\]
We can compute that
\begin{align*}
(K^2x+Kx)^2-4(K^3x^2+Ky^2) &= K^4x^2-2K^3x^2+K^2x^2-4Ky^2 \\
&= K^2x^2(K-1)^2-4Ky^2 \\
&= (K(K-1)x-2\sqrt{K}y)(K(K-1)x+2\sqrt{K}y).
\end{align*}
The first part of the lemma follows by assigning one of these factors to be strictly positive and one to be strictly negative. The second part follows immediately from \eqref{eq:evals}.
\end{proof}

In particular, it follows from Lemma \ref{lem:fp1} that on the imaginary axis, away from $0$, the eigenvalues are always non-real. When the eigenvalues are complex conjugates, we can compute their absolute values as follows.

\begin{lemma}
\label{lem:fp2}
Suppose that $\lambda_1$ and $\lambda_2$ are complex conjugates. Then $|\lambda_1| = |\lambda_2| = 1$ when $x+iy$ lies on the curve $\gamma_K$ given by
\[ 4K^3x^2 + 4Ky^2=1.\]
\end{lemma}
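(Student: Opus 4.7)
The plan is to read the modulus directly off the constant term of the quadratic via Vieta's formulas. Since the quadratic
\[ \lambda^2 + (-2K^2 x - 2Kx)\lambda + (4K^3 x^2 + 4K y^2) = 0 \]
has roots $\lambda_1, \lambda_2$, the product $\lambda_1 \lambda_2$ equals the constant coefficient $4K^3 x^2 + 4K y^2$.

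Under the hypothesis that $\lambda_1$ and $\lambda_2$ are complex conjugates, $\lambda_2 = \overline{\lambda_1}$, so
\[ |\lambda_1|^2 = |\lambda_2|^2 = \lambda_1 \overline{\lambda_1} = \lambda_1 \lambda_2 = 4K^3 x^2 + 4K y^2. \]
Setting $|\lambda_1| = |\lambda_2| = 1$ is therefore equivalent to $4K^3 x^2 + 4K y^2 = 1$, which is precisely the defining equation of the curve $\gamma_K$. Thus the equivalence is immediate and no case analysis or further computation is needed; the only "obstacle" is recognizing that the product of the (complex-conjugate) eigenvalues is both nonnegative and coincides with the squared modulus, which is the content of the standard identity $z\bar z = |z|^2$. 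This gives a one-line proof once Vieta's formula is invoked.
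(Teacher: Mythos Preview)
Your proof is correct and essentially identical to the paper's: both obtain $\lambda_1\lambda_2 = 4K^3x^2+4Ky^2$ (you via Vieta's formula, the paper by direct expansion of the explicit roots) and then use $\lambda_1\lambda_2 = |\lambda_1|^2$ for complex-conjugate roots. Your version is marginally cleaner for invoking Vieta by name rather than multiplying out $(a+b)(a-b)$, but there is no substantive difference.
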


Clearly $\gamma_K$ is the boundary curve of an ellipse.
Note that via \eqref{eq:scaleevals}, it follows that $|\lambda_1| = |\lambda_2| = r$ when
\[ 4K^3x^2 + 4Ky^2=r^2.\]

\begin{proof}
As $|\lambda_1| = |\lambda_2| = 1$ and $\lambda_1$ is a complex conjugate of $\lambda_2$, it follows that $\lambda_1\lambda_2 =1$. Therefore, we have
\[ (K^2x+Kx)^2-[(K^2x+Kx)^2-4(K^3x^2+Ky^2)]=1.\]
Simplifying this expression completes the proof.
\end{proof}

We now turn to the case where the eigenvalues are real.

\begin{lemma}
\label{lem:fp3}
The set where one of the eigenvalues has values $\pm 1$ is given by the pair of ellipses $E_K^{\pm}$ with centers 
\[ w_{\pm} = \pm  \frac{ K+1}{4K^2} ,\]
major and minor semi-axes oriented vertically and horizontally respectively, with lengths
\[ v = \frac{K-1}{4K} \text{ and } h = \frac{K-1}{4K^2}.\]
\end{lemma}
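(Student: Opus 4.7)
The plan is a direct computation: substitute $\lambda = \pm 1$ into the characteristic polynomial
\[ \lambda^2-(2K^2x+2Kx)\lambda+4K^3x^2+4Ky^2=0 \]
that defined the eigenvalues in \eqref{eq:evals}, and show in each case that the resulting curve in $(x,y)$ is an ellipse of the stated shape and position.

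First I would set $\lambda=1$, obtaining
\[ 4K^3x^2-2K(K+1)x+4Ky^2+1=0. \]
Dividing through by $4K$ and completing the square in $x$ produces
\[ K^2\!\left(x-\frac{K+1}{4K^2}\right)^{\!2}+y^2=\frac{(K+1)^2}{16K^2}-\frac{1}{4K}=\frac{(K-1)^2}{16K^2}, \]
where the last equality is an elementary simplification. Dividing by the right-hand side puts this in standard ellipse form centered at $w_+=(K+1)/(4K^2)$, with horizontal semi-axis $(K-1)/(4K^2)=h$ and vertical semi-axis $(K-1)/(4K)=v$. Since $K>1$ gives $v>h$, the major axis is vertical, as claimed; this is the ellipse $E_K^+$.

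Next I would repeat the substitution with $\lambda=-1$. The only change in the quadratic is that the coefficient of $x$ flips sign, so the same completion of the square yields the mirror image ellipse: center $w_-=-(K+1)/(4K^2)$, same semi-axes $v,h$, same orientation. This gives $E_K^-$.

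There is essentially no obstacle here beyond bookkeeping; the one point worth noting is that the characteristic polynomial has real coefficients, so $\lambda=\pm 1$ being a root forces the real-eigenvalue regime of Lemma \ref{lem:fp1} automatically, and the curves $E_K^{\pm}$ must accordingly lie in the complement of $S_K^+\cup S_K^-$. This is consistent but not needed for the proof itself: the two displayed ellipse equations are exactly the loci where $1$ (respectively $-1$) satisfies the characteristic polynomial, which is the content of the lemma.
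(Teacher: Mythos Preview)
Your proof is correct and follows essentially the same approach as the paper: substitute $\lambda=\pm 1$ into the characteristic equation, complete the square in $x$, and read off the ellipse data. The only cosmetic differences are that the paper works with the solved form \eqref{eq:evals} (squaring both sides, which recovers the characteristic polynomial) and keeps $\lambda$ generic throughout before using $\lambda^2=1$, whereas you invoke the characteristic polynomial directly and treat $\lambda=1$ and $\lambda=-1$ separately.
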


\begin{proof}
Suppose $\lambda$ is either $\pm 1$. Then it follows from \eqref{eq:evals} that
\[ (\lambda-(K^2x+Kx))^2 = (K^2x+Kx)^2-4(K^3x^2+Ky^2).\]
By expanding out the brackets, we obtain
\[ (K^2x+Kx)^2-2\lambda(K^2x+Kx)+\lambda^2=(K^2x+Kx)^2-4(K^3x^2+Ky^2).\]
Collecting like terms together and simplifying, this yields
\[ 4K^3x^2+(-2K^2\lambda-2K\lambda)x + 4Ky^2+\lambda^2 =0 .\]
Completing the square in the $x$-variable, we get
\begin{equation} 
\label{eq:fp2-1}
4K^3\left (x-\left(\frac{\lambda(K+1)}{4K^2}\right) \right )^2+4Ky^2=4K^3\left(\frac{\lambda(K+1)}{4K^2}\right)^2-\lambda^2 .
\end{equation}
Now, the right hand side of \eqref{eq:fp2-1} is 
\begin{align*}
4K^3\left(\frac{\lambda(K+1)}{4K^2}\right)^2-\lambda^2&= 4K^3\lambda^2\left(\frac{1}{16K^2}+\frac{1}{8K^3}+\frac{1}{16K^4}\right )-\lambda^2\\
&= \frac{K\lambda^2}{4}+\frac{\lambda^2}{2}+\frac{\lambda^2}{4K}-\lambda^2 \\
&=\frac{\lambda^2}{4K}(K-1)^2.
\end{align*}
As $\lambda = \pm 1$ and $K>1$, this term is strictly positive. From \eqref{eq:fp2-1}, we therefore obtain
\begin{equation}
\label{eq:ellipses} 
\frac{\left(x-\left(\frac{\lambda(K+1)}{4K^2}\right)\right)^2}{\left(\frac{\lambda(K-1)}{4K^2}\right)^2}+\frac{y^2}{\left(\frac{\lambda(K-1)}{4K}\right)^2}=1.\end{equation}
This gives the equations of the required ellipses.
\end{proof}

As $w_+ - h = (2K^2)^{-1}>0$, it follows that $E_K^+$ is contained in the right half-plane, and $E_K^-$ in the left half-plane.

\begin{lemma}
\label{lem:fp4}
The intersection of one the ellipses $E_K^{\pm}$ and one of the lines $L_K^{\pm}$ is a unique point given by
\[ \left ( \pm \frac{1}{K(K+1)} , \pm \frac{ K-1}{2\sqrt{K}(K+1) } \right ),\]
for an appropriate choice of $\pm$ in both coordinates to correspond to the four combinations of choices of lines and ellipses. Moreover, the intersection of $\gamma_K$ with $L_K^{\pm}$ occurs at precisely the same four points.
\end{lemma}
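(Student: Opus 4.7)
The plan is a direct algebraic verification carried out in two halves: I would substitute the parametrizations of the lines $L_K^{\pm}$ into the defining equations of $\gamma_K$ and of $E_K^{\pm}$, and in each case show that the resulting equation in $x$ factors as a perfect square, which simultaneously gives the intersection point and its uniqueness.

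For $\gamma_K$, I substitute $y = \tfrac{\sqrt{K}(K-1)x}{2}$ (the equation of $L_K^+$) into $4K^3 x^2 + 4K y^2 = 1$. The algebraic identity $4K + (K-1)^2 = (K+1)^2$ collapses the expression to $K^2(K+1)^2 x^2 = 1$, so $x = \pm \tfrac{1}{K(K+1)}$, with corresponding $y = \pm \tfrac{K-1}{2\sqrt{K}(K+1)}$. Running the same computation with $y = -\tfrac{\sqrt{K}(K-1)x}{2}$ handles $L_K^-$ and produces the four claimed points.

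For the ellipses, substituting $y = \tfrac{\sqrt{K}(K-1)x}{2}$ into \eqref{eq:ellipses} with $\lambda = \pm 1$, the $y^2$-term simplifies to $4K^3 x^2$ using the same cancellation as above, and after clearing denominators the equation collapses to
\[ K^2(K+1)^2 x^2 - 2K(K+1)\lambda x + 1 = 0, \]
which factors as $\bigl(K(K+1)x - \lambda\bigr)^2 = 0$. This perfect-square factorization simultaneously gives the unique solution $x = \tfrac{\lambda}{K(K+1)}$ and shows that $L_K^+$ is tangent to $E_K^\lambda$ there. Matching $\lambda = +1$ with $E_K^+$, $\lambda = -1$ with $E_K^-$, and then running the analogous computation on $L_K^-$, accounts for all four intersection points and establishes uniqueness.

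There is no real obstacle beyond careful bookkeeping of signs across the four sign combinations. A useful conceptual sanity check comes from Lemma \ref{lem:fp1}: on $L_K^+$ the double real eigenvalue is $\lambda = Kx(K+1)$, which equals $\pm 1$ precisely at $x = \pm \tfrac{1}{K(K+1)}$, so at those points both eigenvalues equal $\pm 1$ and the point must lie on $E_K^{\pm}$ as well as on $\gamma_K$. This makes the coincidence asserted by the lemma transparent.
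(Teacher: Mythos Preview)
Your proof is correct and follows essentially the same approach as the paper: substitute the line equation into the ellipse and into $\gamma_K$, and show the resulting quadratic in $x$ is a perfect square. The only cosmetic differences are that you treat $\gamma_K$ before $E_K^{\pm}$ (the paper does the reverse), you carry the parameter $\lambda=\pm1$ through the ellipse computation rather than fixing $\lambda=1$ and arguing by symmetry, and you add the eigenvalue sanity check at the end, which is a nice touch not present in the paper.
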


\begin{proof}
We will work with $E_K^+$ and $L_K^+$. The other cases follow analogously. Plugging the equation $y = \sqrt{K}(K-1)x/2$ for $L_K^+$ into the equation for the ellipse $E_K^+$ from \eqref{eq:ellipses}, we obtain
\[ K^2 \left ( x - \left ( \frac{K+1}{4K^2} \right ) \right )^2 + \frac{ K(K-1)^2x^2}{4} = \frac{(K-1)^2}{16K^2}.\]
Multiplying out the brackets, collecting like terms and simplifying, we obtain
\[ x^2 \left ( \frac{ K(K+1)^2}{4} \right ) - x \left ( \frac{K+1}{2} \right ) + \frac{1}{4K} = 0.\]
This factorizes as
\[ \left ( \frac{ x\sqrt{K} (K+1) }{2} - \frac{1}{2\sqrt{K} } \right )^2 = 0,\]
which has the unique solution $x = 1/(K(K+1))$. From this we obtain the value for $y$ given in the statement of the lemma.

For the second part of the lemma, computing the intersection of $\gamma_K$ with $L_K^{\pm}$ gives
\[ 4K^3x^2 + K^2(K-1)^2x^2=1.\]
This simplifies to 
\[ x^2K^2(K+1)^2 = 1,\]
from which the second part of the lemma follows.
\end{proof}

\begin{figure}[h]
\begin{center}
\includegraphics[width=3in]{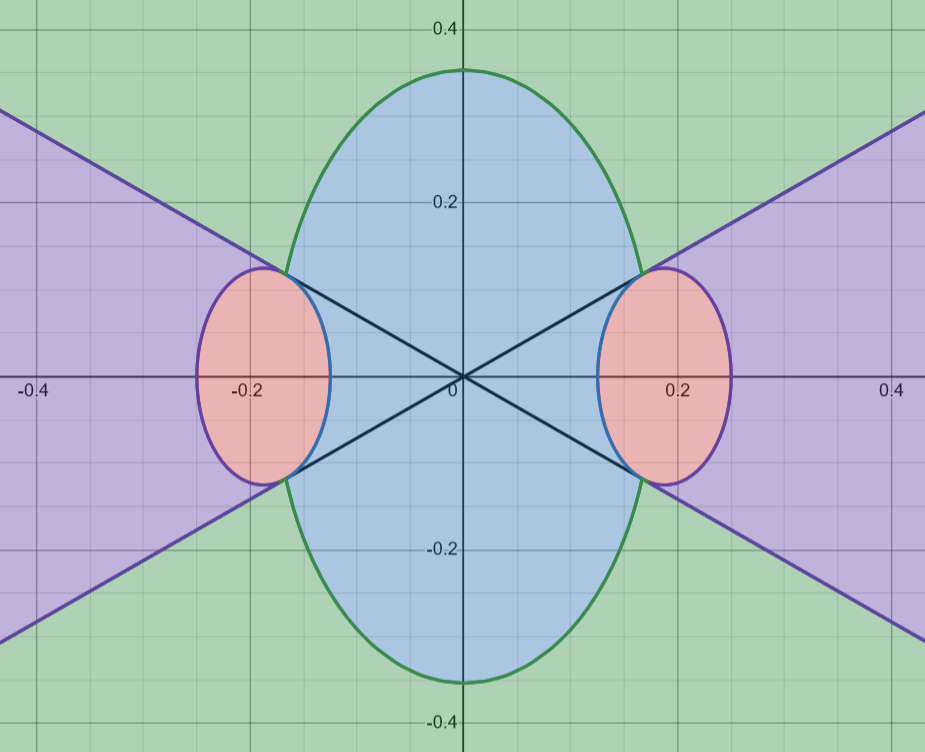}
\end{center}
\caption{The classification for $K=2$. The red ellipses are $E_2^{\pm}$ where fixed points are saddle points. The blue region is $A_2 \setminus \overline{( E_2^+ \cup E_2^-)}$ where fixed points are attracting. The green region gives fixed points that are repelling with complex conjugate eigenvalues. The purple region gives fixed points that are repelling with real eigenvalues.}
\label{fig:cicada}
\end{figure}

We can now put everything together to classify the fixed points of $H$ in terms of their location. See Figure \ref{fig:cicada} for the example $K=2$.

\begin{theorem}
\label{thm:classify}
Let $K>1$, let $c\in \C$ and let $H = H_{K,0,c}$. Denote by $A_K$ the bounded component of the complement of the ellipse given by $\gamma$ in Lemma \ref{lem:fp2}. Suppose that $z = x+iy$ is a fixed point of $H$. Then:
\begin{enumerate}[(a)]
\item $z$ is attracting if $z$ lies in $A_K \setminus \overline{( E_K^+ \cup E_K^-)}$;
\item $z$ is repelling if $z$ lies in $\C \setminus \overline{ (A_K \cup E_K^+ \cup E_K^-)}$;
\item $z$ is a saddle point if $z$ lies in $E_K^{\pm} $.
\end{enumerate}
\end{theorem}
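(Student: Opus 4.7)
The strategy is to analyze the eigenvalues of the Jacobian $J(z) = DH(z)$ at a candidate fixed point $z = x+iy$ via the Schur-Cohn (Jury) stability criterion and to recognize each resulting algebraic inequality as one of the geometric conditions appearing in Lemmas \ref{lem:fp2} and \ref{lem:fp3}. From the derivative matrix displayed in the lead-up to \eqref{eq:evals}, the trace and determinant of $J(z)$ are
\[ T(z) = 2K(K+1)x, \qquad D(z) = 4K^3x^2 + 4Ky^2, \]
so its characteristic polynomial is $p(\lambda) = \lambda^2 - T(z)\lambda + D(z)$.

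For part (a), the Schur-Cohn criterion says that both roots of $p$ lie strictly inside the unit disk if and only if $D < 1$, $p(1) = 1 - T + D > 0$, and $p(-1) = 1 + T + D > 0$. By the definition of $\gamma_K$ in Lemma \ref{lem:fp2}, the first condition is exactly $z \in A_K$. The equation $1 - T + D = 0$, after completing the square in $x$ in the manner of the proof of Lemma \ref{lem:fp3} with $\lambda = 1$, is precisely the defining equation of $E_K^+$; evaluating at the center of $E_K^+$ yields $p(1) = 1 - (K+1)^2/(4K) < 0$ since $(K-1)^2 > 0$, so by continuity $p(1)$ is negative inside $E_K^+$ and positive outside. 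The analogous statement for $p(-1)$ and $E_K^-$ follows by the symmetry $x \mapsto -x$. Hence the three Schur-Cohn conditions together are equivalent to $z \in A_K \setminus \overline{E_K^+ \cup E_K^-}$.

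For part (b), both eigenvalues have modulus greater than one if and only if the eigenvalues of $J(z)^{-1}$ lie inside the unit disk. Since $D(z) > 0$ for $z \neq 0$ (and the case $z = 0$, if it is a fixed point, falls under part (a), as $J(0) = 0$), we may apply the Schur-Cohn criterion to the reciprocal polynomial $\lambda^2 - (T/D)\lambda + 1/D$; clearing denominators produces $D > 1$ together with the same two inequalities $p(1) > 0$ and $p(-1) > 0$ as in part (a). These conditions describe exactly $z \notin \overline{A_K \cup E_K^+ \cup E_K^-}$.

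For part (c), the remaining region is the interior of $E_K^+ \cup E_K^-$. The two ellipses are disjoint, since the horizontal semi-axis $h = (K-1)/(4K^2)$ is strictly less than $|w_{\pm}| = (K+1)/(4K^2)$, placing $E_K^+$ in the open right half-plane and $E_K^-$ in the open left half-plane. Moreover, by Lemma \ref{lem:fp4} each of the sector-boundary lines $L_K^{\pm}$ is tangent to each ellipse, so $E_K^+ \cup E_K^-$ lies entirely in the complement of $S_K^+ \cup S_K^-$, where Lemma \ref{lem:fp1} guarantees real eigenvalues. On the interior of $E_K^+$ the calculation above gives $p(1) < 0$, while $p(-1) > 0$ there: its zero locus is the disjoint ellipse $E_K^-$, and at the center of $E_K^+$ one computes $p(-1) = 1 + 3(K+1)^2/(4K) > 0$. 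Since $p$ is a real upward-opening quadratic, its roots $r_1 < r_2$ must satisfy $-1 < r_1 < 1 < r_2$, yielding one eigenvalue of modulus less than one and one of modulus greater than one; that is, $z$ is a saddle. The case $z \in E_K^-$ is symmetric with the roles of $p(1)$ and $p(-1)$ swapped. The principal challenge is really just bookkeeping: identifying $\{p(1) = 0\} = E_K^+$ and $\{p(-1) = 0\} = E_K^-$ correctly and checking via Lemma \ref{lem:fp4} that $E_K^+ \cup E_K^-$ avoids the sectors $S_K^{\pm}$, so that the Schur-Cohn analysis applies unambiguously to real roots inside the ellipses.
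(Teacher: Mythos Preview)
Your proof is correct and takes a genuinely different route from the paper's. The paper proceeds geometrically: it first splits the plane into the sectors $S_K^{\pm}$ (complex-conjugate eigenvalues) and $\Sigma_K^{\pm}$ (real eigenvalues) via Lemma~\ref{lem:fp1}, handles attracting/repelling in $S_K^{\pm}$ using Lemma~\ref{lem:fp2} and the scaling~\eqref{eq:scaleevals}, and then in $\Sigma_K^{\pm}$ uses Lemma~\ref{lem:fp4} to see that $\gamma_K$, $L_K^{\pm}$, and $\partial E_K^{\pm}$ meet in common points, so that $\overline{E_K^{\pm}}$ cleanly partition $\Sigma_K^{\pm}$ into an attracting piece, a repelling piece, and the saddle interiors. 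Your Schur--Cohn argument is more algebraic and more uniform: the three inequalities $D<1$, $p(1)>0$, $p(-1)>0$ are identified with the regions cut out by $\gamma_K$, $\partial E_K^+$, $\partial E_K^-$ respectively, and parts~(a) and~(b) then fall out without ever separating the real and complex eigenvalue cases. One small efficiency you could claim: in part~(c) your appeal to Lemma~\ref{lem:fp4} to place $E_K^{\pm}$ inside the real-eigenvalue sectors is actually unnecessary, since $p(1)<0$ for a real upward-opening quadratic already forces two real roots straddling $1$; the saddle conclusion then follows from $p(-1)>0$ alone. The paper's approach makes the geometry of Figure~\ref{fig:cicada} more transparent (one sees why the regions fit together along the tangency points of Lemma~\ref{lem:fp4}), while yours gets to the answer with less casework.
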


\begin{proof}
By Lemma \ref{lem:fp1}, $\lambda_1,\lambda_2$ are non-real complex conjugates in the sectors $S_K^{\pm}$. Then by Lemma \ref{lem:fp2} and \eqref{eq:scaleevals}, cases (a) and (b) in $S_K^{\pm}$ follow directly. Denote by $\Sigma_K^{\pm}$ the two sectors given by the complement of $S_K^{\pm}$. By Lemma \ref{lem:fp3}, case (c) in $\Sigma_K^{\pm}$ follows directly. By Lemma \ref{lem:fp4}, as $\gamma_K$, $L_K^{\pm}$ and $\partial E_K^{\pm}$ all meet in common points, it follows that $\overline{E_k^{\pm}}$ partition each of $\Sigma_K^{\pm}$ into two components, one where both eigenvalues are less than one, and one where both eigenvalues are greater than one. These cases complete (a) and (b).
\end{proof}

\section{Examples}
\label{sec:examples}

In this section, we will construct the examples that give Theorem \ref{thm:examples}.
We already showed in Section 5 that it is possible for $H_{K,0,c}$ to have two, three or four fixed points. In general, solving $H_{K,\theta,c}(z) = z$ yields two quadratic equations in $x$ and $y$. Bezout's Theorem then implies that the maximum number of solutions is four unless there is a curve of fixed points of $H_{K,\theta,c}$. However, this latter case would imply a curve of points where the derivative of $H_{K,\theta,c}$ is the identity, which does not occur. This proves Theorem \ref{thm:examples} (a).

For part (b), consider the parameters $K = 0.5$ $\theta =0$ and $c = -3/2-i/2$. We typically choose $K>1$, but our choice of parameters is conjugate to $K=2$, $\theta  =\pi/2$ and $c = -3/8-i/8$ by \cite[Proposition 3.1]{FG10}. We can compute that a fixed point $z_0$ of $H_{1/2,0,-3/2-i/2}$ lies at approximately $-1.195 - 0.228i$, see Figure \ref{fig:3}.

Computing the eigenvalues of $H_{1/2,0,-3/2-i/2} ' (z_0)$ yields two complex conjugate eigenvalues $\lambda_{\pm}$ which are approximately $-0.896 \pm 0.121i$. This yields
\[ | \lambda _{\pm} | \approx 0.904 \]
and thus we conclude that $z_0$ is an attracting fixed point of $H_{1/2,0,-3/2-i/2}$. However, we may computationally check that
\begin{equation}
\label{eq:examples1} 
H_{1/2,0,-3/2-i/2}^8(z_0) \approx -8.875 - 1.193i.
\end{equation}
By \cite[Theorem 6.3]{FG10}, the Mandelbrot set $\mathcal{M}_{1/2,0}$ can be characterized as the set of $c\in \C$ for which $H_{1/2,0,c}^n(0) \leq 8$ for all $n\in \N$. Combining this with \eqref{eq:examples1}, we conclude that $-3/2-i/2 \notin \mathcal{M}_{1/2,0}$, see Figure \ref{fig:1}.

\begin{figure}[h]
\begin{center}
\includegraphics[width=5in]{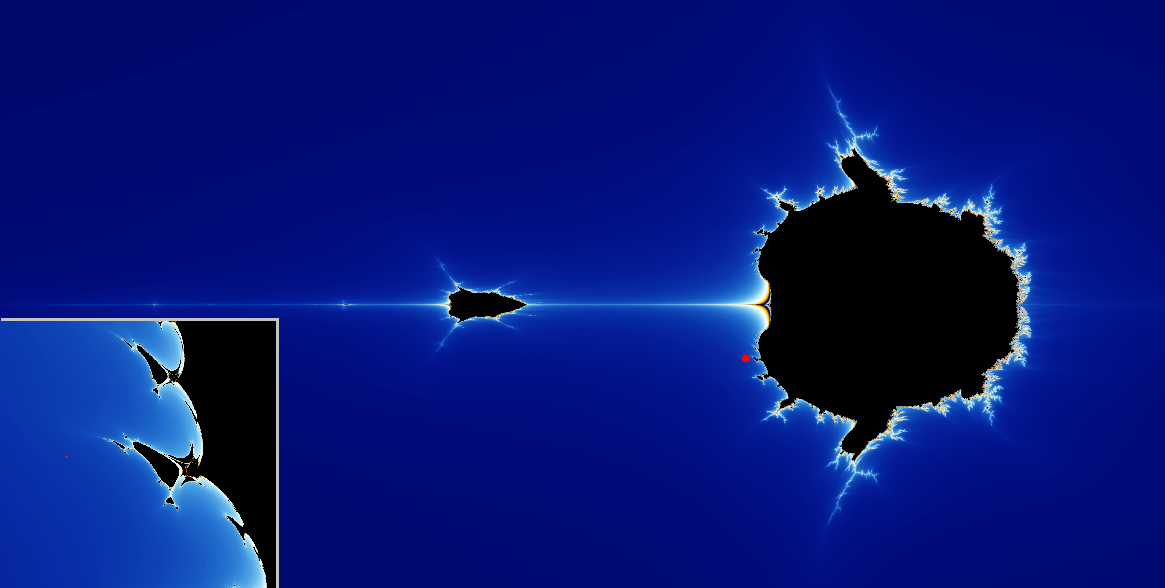}
\end{center}
\caption{The Mandelbrot set $\mathcal{M}_{1/2,0}$ with $c = -3/2-i/2$ marked and a zoom inset.}
\label{fig:1}
\end{figure}

This example completes the proof of Theorem \ref{thm:examples} (b). For part (c), we already know from Theorem \ref{thm:classify} that $H_{K,0,c}$ may have saddle fixed points. We will look more in depth at the example corresponding to $K = 5$, $\theta = 0$ and $c = -0.1$. See Figure \ref{fig:6}.

\begin{figure}[h]
\begin{center}
\includegraphics[width=1in]{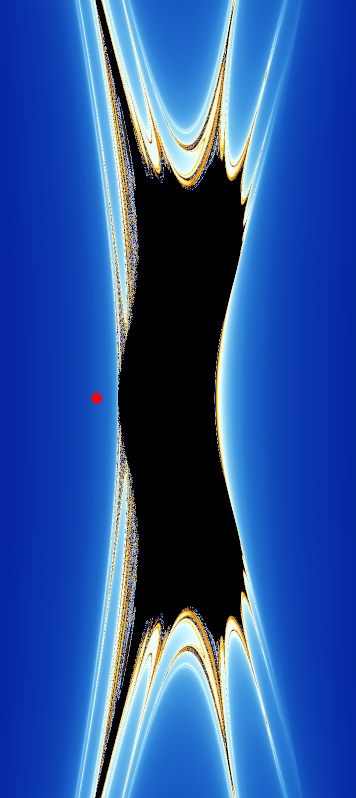}
\end{center}
\caption{Part of the Mandelbrot set $\mathcal{M}_{5,0}$ with $c=-0.1$ marked.}
\label{fig:6}
\end{figure}

We have
\[ H_{5,0,-0.1}(z) = 25x^2 - y^2 - 0.1 + 10ixy,\]
from which we see that restricting to the real axis gives the real quadratic polynomial $H_{5,0,-0.1}(x) = 25x^2-0.1$. In particular, on the real axis, $H_{5,0,-0.1}$ is conjugate to the quadratic polynomial $P_{-2.5}(z) = z^2-2.5$. As $-2.5$ is not in the Mandelbrot set, it follows that the Julia set of $P_{-2.5}$ is a Cantor subset of $\R$. We deduce that $BO(H_{5,0,-0.1}) \cap \R$ is also a Cantor subset of $\R$.

Now, we can compute that $H_{5,0,-0.1}$ has a fixed point at $z_0 \approx 0.086$. Computing the eigenvalues of $H'_{5,0,-0.1}(z_0)$, we obtain $\lambda_1,\lambda_2$ which are approximately $4.317$ and $0.863$ respectively. Evidently this implies that $z_0$ is a saddle fixed point of $H_{5,0,-0.1}$. The eigenvectors for $\lambda_1,\lambda_2$ are $(1,0)$ and $(0,1)$ respectively, which means that $H_{5,0,-0.1}$ is repelling in the $x$-direction and attracting in the $y$-direction near $z_0$.

More precisely, the version of the Hartman-Grobman Theorem from \cite{Har60} provides a $C^1$ conjugacy in a neighborhood $\mathcal{N}$ of $z_0$ of the diffeomorphism $H_{5,0,-0.1}|_{\mathcal{N}}$ to the derivative $A:= H'_{5,0,-0.1}(z_0)$. That is, there exists a $C^1$ invertible function $\varphi$ such that
$\varphi(z_0) = 0$ and
\begin{equation} 
\label{eq:conj}
\varphi \circ H_{5,0,-0.1} \circ \varphi^{-1} = A.
\end{equation}
Then the stable manifold $W^S$ is given by $\varphi^{-1}( \varphi (\mathcal{N}) \cap i\R )$ and the unstable manifold $W^U$ is given by  $\varphi^{-1}( \varphi (\mathcal{N}) \cap \R )$.

\begin{proposition}
\label{prop:stable manifold}
The component of $BO(H_{5,0,-0.1}) \cap \mathcal{N}$ containing $z_0$ is precisely the stable manifold $W^S$ and is, in particular, a curve.
\end{proposition}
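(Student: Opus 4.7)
The plan is to verify the two inclusions $W^S \subseteq C$ and $C \subseteq W^S$, where $C$ denotes the connected component of $BO(H_{5,0,-0.1}) \cap \mathcal{N}$ containing $z_0$. The central tool is the Hartman--Grobman conjugacy \eqref{eq:conj}, together with the fact that the eigenvalues of $A$ satisfy $|\lambda_1| > 1 > |\lambda_2|$.

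For $W^S \subseteq C$, I would argue as follows. If $z \in W^S$, then $\varphi(z)$ lies on $i\R$, the stable eigenspace of $A$. Since $|\lambda_2|<1$, we have $A^n(\varphi(z)) = \lambda_2^n \varphi(z) \to 0$ inside $\varphi(\mathcal{N})$, and applying $\varphi^{-1}$ gives $H^n_{5,0,-0.1}(z) \to z_0$ with every iterate remaining in $\mathcal{N}$. Thus $z \in BO(H_{5,0,-0.1})$. Shrinking $\mathcal{N}$ so that $\varphi(\mathcal{N})$ is convex, the intersection $\varphi(\mathcal{N}) \cap i\R$ is a connected line segment and $W^S$ is its $C^1$-diffeomorphic image, hence a connected $C^1$ arc through $z_0$; this proves both the ``in particular'' assertion of the proposition and the inclusion $W^S \subseteq C$.

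For the reverse inclusion $C \subseteq W^S$, the key observation from the conjugacy is that $W^S$ coincides with the set $\{z \in \mathcal{N} : H^n_{5,0,-0.1}(z) \in \mathcal{N} \text{ for all } n\geq 0\}$. Indeed, if $\varphi(z) = a+bi$ with $a \neq 0$, then $|A^n(\varphi(z))| = |\lambda_1^n a + \lambda_2^n bi| \to \infty$ by dominance of the $\lambda_1^n$ term, so the orbit of $z$ exits the bounded domain $\mathcal{N}$ at some first time $N = N(z) \geq 1$. The remaining task is to upgrade ``orbit escape from $\mathcal{N}$'' into ``$z$ lies in a component of $BO(H_{5,0,-0.1}) \cap \mathcal{N}$ different from that of $z_0$''. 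My strategy is to exploit the specific structure of $H_{5,0,-0.1}$: by the linearization, $H^N_{5,0,-0.1}(z)$ lies close to one of the two points where the unstable manifold $W^U$ meets $\partial \mathcal{N}$, and $W^U$ is contained in the $H$-invariant real axis, on which $H_{5,0,-0.1}$ is smoothly conjugate to $P_{-2.5}$ with Cantor Julia set in $\R$. Choosing the radius of $\mathcal{N}$ so that these two exit points fall in the basin of infinity of $P_{-2.5}|_\R$ (an open and dense condition in a real neighbourhood of $z_0$), each exit point has a $\C$-open neighbourhood inside $I(H_{5,0,-0.1})$. Pulling these open sets back along the appropriate branch through the $N$ iterates yields an open subset of $\mathcal{N} \setminus W^S$ containing $z$ that lies, away from a totally disconnected countable residue of preimages of the real Cantor set, entirely in $I(H_{5,0,-0.1})$. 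Thus $z$ is separated from $z_0$ inside $BO(H_{5,0,-0.1}) \cap \mathcal{N}$ by escaping points, and so $z \notin C$.

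The hard part is the last separation step: one must show rigorously that the totally disconnected residual set of $BO$-points lying off $W^S$ (the inverse images of the real Cantor set under iterates of $H_{5,0,-0.1}$) cannot topologically join $W^S$. The plan is to invoke the linearization once more to produce a tubular neighbourhood of $W^S$ inside $\mathcal{N}$ whose complement in $\mathcal{N}$ has every orbit leaving through the unstable exits of $\mathcal{N}$; by the genericity in the choice of $\mathcal{N}$ these exits then iterate to infinity, supplying the desired topological separation and yielding $C = W^S$.
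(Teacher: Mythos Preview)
Your inclusion $W^S \subseteq C$ is fine, but the argument for $C \subseteq W^S$ has a gap that your final paragraph does not close. The tubular-neighbourhood plan, read literally, asserts that every point of $\mathcal{N}$ outside the tube eventually iterates to infinity; shrinking the tube would then give $BO(H_{5,0,-0.1}) \cap \mathcal{N} \subseteq W^S$. But this is false: $BO(H_{5,0,-0.1}) \cap \R$ is a \emph{perfect} Cantor set, so $\mathcal{N}$ contains real bounded-orbit points other than $z_0$, and none of these lie on the transversal curve $W^S$. What you actually need is not that points off $W^S$ escape, but that they are \emph{separated} from $W^S$ inside $BO\cap\mathcal{N}$ by escaping sets. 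Your pullback step does not deliver this: even granting an open $V \ni z$ with $V \cap W^S = \emptyset$ and $V \cap BO$ totally disconnected, it does not follow that $z \notin C$, since connected sets can have totally disconnected relatively open pieces (the Knaster--Kuratowski fan is the standard example). There is also a smaller technical issue: the claim that $H^N(z)$ lands near $W^U \cap \partial \mathcal{N}$ is unjustified, because the conjugacy \eqref{eq:conj} gives no control over $H^N(z)$ once the orbit has left $\mathcal{N}$.

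The paper obtains the needed separation by iterating in the opposite direction. One picks an escaping real point $z_\delta$ arbitrarily close to $z_0$ (possible since $BO \cap \R$ is totally disconnected), joins it to $\infty$ by a path $\gamma_\delta \subset I(H_{5,0,-0.1})$ using that the escaping set is open and connected, and sets $T = \varphi(\gamma_\delta \cap \mathcal{N})$. Under $A^{-m}$ the real coordinate of $T$ contracts by $\lambda_1^{-m}$ while the imaginary coordinate expands by $\lambda_2^{-m}$, so $A^{-m}(T) \cap \varphi(\mathcal{N})$ eventually runs from the top of $\varphi(\mathcal{N})$ to the bottom and accumulates on $i\R$. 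Transferred back via \eqref{eq:conj} and the complete invariance of $I(H_{5,0,-0.1})$, these are escaping curves in $\mathcal{N}$ that separate $\mathcal{N}$ and accumulate on $W^S$; repeating with $z_\delta$ on the other side of $z_0$ gives barriers on both sides and forces $C = W^S$. Backward iteration stretches curves across $\mathcal{N}$ in the stable direction and so manufactures separating barriers; forward iteration shrinks that direction, which is why your approach stalls.
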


\begin{proof}
As $BO(H_{5,0,-0.1})\cap \mathbb{R}$ is totally disconnected, for any $\delta>0$, the real interval $(z_{0}-\delta,z_{0}+\delta)$ intersects $I(H_{5,0,-0.1})$. Choose such a point $z_{\delta}$. Then as $I(H_{5,0,-0.1})$ is open and connected, it is path connected. In particular, there is a closed smooth path $\gamma_{\delta}$ in $\mathbb{C}_{\infty}$ that connects $z_{\delta}$ to $\infty$ and otherwise is contained in $I(H_{5,0,-0.1})$. Choose $\delta>0$ small enough that $(z_{0}-\delta,z_{0}+\delta)\subset \mathcal{N}$. Let $T=\varphi(\gamma_{\delta}\cap \mathcal{N})$. By construction, for $z\in \varphi(N)$ with non-zero imaginary part we have 
	
	$$|\Im(A^{-m}(z))|=\frac{|\Im(z)|}{\lambda_{2}^{m}} \to \infty$$ 
as $m\to \infty$ because $\lambda_{2}<1$. Moreover, for any $z\in \varphi(N)$,
	
	$$\Re(A^{-m}(z))=\frac{\Re(z)}{\lambda_{1}^{m}} \to 0$$
as $m\to \infty$ because $\lambda_1>1$.	
	
	Since $\varphi(\mathcal{N})$ is open, we may assume that $T$ contains elements with positive imaginary parts and elements with negative imaginary parts. Therefore, for any $\varepsilon>0$, there exists $M\in \mathbb{N}$ such that for $m\geq M$ we have $\operatorname{dist}(A^{-m}(T)\cap \varphi(\mathcal{N}),i\mathbb{R})<\varepsilon$ and $A^{-m}(T)$ and the boundary of $\varphi(\mathcal{N})$ share an element with positive imaginary part in common and, moreover, an element with negative imaginary part in common. In particular, $A^{-m}(T)\cap \varphi(\mathcal{N})$ separates $\varphi(\mathcal{N})$ into two components. Evidently, as $m$ increases, $A^{-m}(T)\cap \varphi(\mathcal{N})$ accumulates on $i\mathbb{R}\cap \varphi(\mathcal{N})$.
	
	Transferring this back to the dynamical plane of $H_{5,0,-0.1}$, using the conjugacy \eqref{eq:conj}, and the complete invariance of $I(H_{5,0,-0.1})$, we obtain curves $\varphi^{-1}(A^{-m}(T)\cap \varphi(\mathcal{N}))$ contained in $\mathcal{N}\cap I(H_{5,0,-0.1})$ which accumulate on $W^{S}\cap \mathcal{N}$. As this argument holds for both cases $z_{\delta}<z_{0}$ and $z_{\delta}>z_{0}$, we accumulate such curves on $W^{S}$ from both left and right. As $W^{S}\subset BO(H_{5,0,-0.1})$, this completes the proof.  
\end{proof}

We can extend the conjugacy $\varphi$ along the stable manifold via \eqref{eq:conj} all the way to a pair of repelling fixed points at $w_{\pm} = 1/10 \pm i / 2\sqrt{5}$.
This implies that the component of $BO(H_{5,0,-0.1})$ containing $z_0$ is a curve connecting $w_+$ to $w_-$ passing through $z_0$. However, we do not know if this is the whole component.

The same argument in Proposition \ref{prop:stable manifold} can be used at periodic points of $H_{5,0,-0.1}$ contained in the real axis. Moreover, we can use complete invariance to take pre-images of $W^S$. In both cases, we see that the intersection of components of $BO(H_{5,0,-0.1})$ with a neighborhood of the real axis yields curves. See Figure \ref{fig:4} and a zoom back in Figure \ref{fig:5}.

\begin{figure}[h]
\begin{center}
\includegraphics[width=1in]{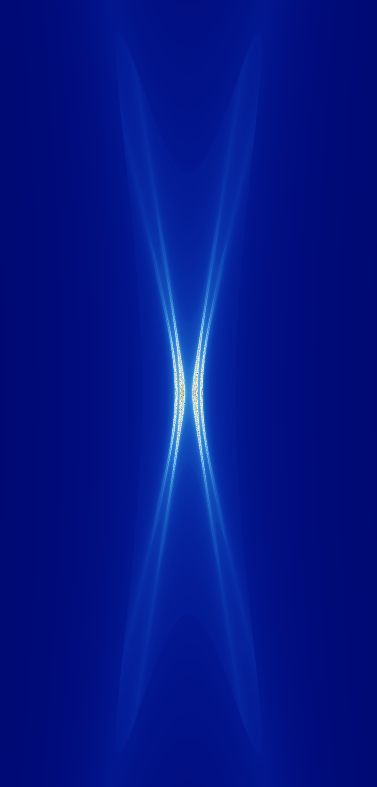}
\end{center}
\caption{The dynamical picture for $H_{5,0,-0.1}$.}
\label{fig:4}
\end{figure}

{\bf Acknowledgements:} The authors thanks James Waterman for many interesting conversations on the topic of this paper. Figure \ref{fig:cicada} was created using Desmos. The other figures were created using Ultra Fractal 6.

\end{document}